\pdfoutput=1
\documentclass[11pt]{article}

\usepackage[margin=1in]{geometry}

\usepackage[T1]{fontenc}
\usepackage[utf8]{inputenc}
\usepackage{lmodern}
\usepackage[protrusion=false,expansion=true]{microtype}

\usepackage{booktabs}
\usepackage{amsmath,amssymb,amsthm}
\usepackage{mathtools}
\usepackage{hyperref}
\hypersetup{hidelinks}
\usepackage{xurl}
\usepackage{graphicx}

\theoremstyle{plain}
\newtheorem{theorem}{Theorem}[section]
\newtheorem{lemma}[theorem]{Lemma}
\newtheorem{proposition}[theorem]{Proposition}
\newtheorem{corollary}[theorem]{Corollary}

\theoremstyle{definition}
\newtheorem{definition}[theorem]{Definition}
\newtheorem{example}[theorem]{Example}
\newtheorem{remark}[theorem]{Remark}

\theoremstyle{remark}
\newtheorem*{notation}{Notation}

\newcommand{\R}{\mathbb{R}}
\newcommand{\N}{\mathbb{N}}

\newcommand{\Jcost}{J}
\newcommand{\Sym}{\mathcal{S}}
\newcommand{\Obj}{\mathcal{O}}
\newcommand{\RefStruct}{\mathcal{R}}
\newcommand{\Mean}{\mathrm{Mean}}

\title{Reciprocal Convex Costs for Ratio Matching: \\[0.3em]
Functional-Equation Characterization and Decision Geometry}

\author{
Jonathan Washburn\thanks{Recognition Physics Research Institute, Austin, Texas, USA. Email: \texttt{jon@recognitionphysics.org}.}
\and
Amir Rahnamai Barghi\thanks{Corresponding author. Recognition Physics Research Institute, Austin, Texas, USA. Email: \texttt{arahnamab@gmail.com}.}
}

\date{}
\begin{document}

\maketitle

\begin{abstract}
We study ratio-induced mismatch costs of the form $c(s,o)=\Jcost\big(\iota_S(s)/\iota_O(o)\big)$ built from positive scale maps $\iota_S:S\to\mathbb R_{>0}$ and $\iota_O:O\to\mathbb R_{>0}$ and a penalty $\Jcost:(0,\infty)\to[0,\infty)$. Assuming inversion symmetry, strict convexity, normalization at $1$, and a multiplicative d\'Alembert identity, we show that $f(u):=1+\Jcost(e^u)$ is continuous and satisfies the additive d'Alembert equation; hence, by a classical classification theorem, there exists $a>0$ such that
\[
\Jcost(x)=\cosh(a\log x)-1=\frac12\big(x^a+x^{-a}\big)-1,\qquad x>0.
\]
We then analyze the associated argmin mapping over feasible scale sets: existence under explicit subspace-closedness assumptions, an explicit geometric-mean decision geometry for finite dictionaries with stability away from boundaries, exact compositionality for product models, and an optimal sequential mediation principle described by a geometric mean (or its log-space projection when infeasible). The paper is purely mathematical; any semantic interpretation is optional and external to the theorems proved here.
\end{abstract}

\medskip\noindent\textbf{2020 Mathematics Subject Classification.} Primary 39B52, 49J40; Secondary 26A51, 90C25, 94A17{, 90C31}.

\medskip\noindent\textbf{Key words and phrases.} functional equations; d'Alembert equation; reciprocal convex cost; ratio-based optimization; geometric-mean decision boundaries; compositionality; sequential mediation.

\tableofcontents

\section{Introduction}\label{sec:introduction}

This section introduces the optimization-based model of reference, fixes terminology and standing assumptions, and outlines the main results and organization.

We start with two sets:

\begin{itemize}
  \item a \emph{{configuration (token) space}} $S$ (words, codes, internal states, messages, \dots),
  \item an \emph{object space} $O$ (candidate referents, concepts, states of affairs, \dots).
\end{itemize}

\paragraph{Terminology.}
Throughout we use \emph{configuration} (or \emph{token}) for an arbitrary element $s\in S$. We reserve the term \emph{symbol for $o$} for a configuration $s$ satisfying the predicate in Definition~\ref{def:symbol}.

Each space is equipped with a positive \emph{scale} map
\(\iota_S:S\to\mathbb R_{>0}\) and \(\iota_O:O\to\mathbb R_{>0}\), interpreted as an intrinsic ``size/complexity'' in a common currency.  Fix a cost functional \(\Jcost:(0,\infty)\to[0,\infty)\) with the properties stated in Section~\ref{sec:Jcost} (symmetry under inversion, strict convexity, and a unique minimum at $1$).  We then define a \emph{ratio-induced reference cost}
\begin{equation}\label{eq:intro-ratio}
  c(s,o):=\Jcost\!\left(\frac{\iota_S(s)}{\iota_O(o)}\right),\qquad (s,o)\in S\times O.
\end{equation}

\paragraph{Meaning as minimization.}
The \emph{meaning set} of a \emph{configuration} $s$ is the set of minimizers of the reference cost:
\[
  \Mean(s):=\Bigl\{o\in O:\ c(s,o)=\inf_{o'\in O} c(s,o')\Bigr\}.
\]
A priori $\Mean(s)$ may be empty if the infimum is not attained; Section~\ref{sec:main-theorems} gives sufficient conditions for nonemptiness (existence of meanings).  Ties are allowed: meaning is set-valued unless uniqueness is proved under additional hypotheses.

\paragraph{Interpretive content (and its limits).}
Because $\Jcost$ is minimized at $1$, low reference cost forces \emph{scale matching}: a {configuration} can only refer cheaply to objects whose scale is close to its own.  This yields an explicit, checkable constraint on admissible reference patterns.  The framework is deliberately \emph{axiomatic}: the scale maps and the chosen $\Jcost$ are inputs.  The mathematical results below are unconditional \emph{within this model}, but the manuscript does \emph{not} claim that any particular empirical system realizes the axioms without separate validation.

\subsection{A toy example: three-object dictionary}
Let $O=\{o_1,o_2,o_3\}$ with scales $y_i:=\iota_O(o_i)$ satisfying $0<y_1<y_2<y_3$.  For a {configuration} $s$ with scale $x:=\iota_S(s)$, the meaning rule compares the three costs $\Jcost(x/y_i)$.  For the explicit functional \eqref{eq:Jcost}, the boundary between preferring $o_1$ and $o_2$ occurs at the \emph{geometric mean} $\sqrt{y_1y_2}$, and similarly between $o_2$ and $o_3$ at $\sqrt{y_2y_3}$ (Theorem~\ref{thm:geom-boundaries}).  Thus the model induces a piecewise-constant semantic partition of the positive line in the configuration ratio $x$, with stability away from the boundary points.

\subsection{Relation to prior work}
Classical analyses of reference emphasize logical form and truth conditions (e.g.\ Frege and Russell) \cite{frege1892,russell1905}.  The symbol-grounding literature highlights that purely formal symbol manipulation does not by itself determine what symbols are about \cite{harnad1990}.  The present work does not attempt to resolve these debates empirically.  Instead, it isolates a mathematically tractable \emph{selection principle}: aboutness is determined by minimizing an explicit mismatch cost. {For comparison with contemporary subject-matter/aboutness and truthmaker-semantics accounts (e.g.\ Yablo \cite{yablo2014aboutness}, Hawke \cite{hawke2018theories}, and the \emph{Philosophical Studies} symposium discussion \cite{rothschild2017yablo,fine2020yablo,yablo2018reply}), see Section~\ref{sec:related}. The intended payoff is that, once scales are fixed, aboutness becomes a tractable variational problem with explicit decision boundaries and composition theorems.}

\paragraph{Operational foundations and \emph{Recognition Geometry}.}
This manuscript adopts an \emph{optimization-first} viewpoint: once a mismatch cost is fixed, semantic \emph{meaning} is defined by an argmin rule (Definition~\ref{def:meaning}).  A closely related \emph{measurement-first} stance appears in \emph{Recognition Geometry} \cite{washburn2026RG}, which takes recognition events as primitive and derives observable space as a quotient under an operational indistinguishability relation \cite[Def.~4]{washburn2026RG}.  In the same spirit, the present framework treats mismatch costs as primitive measurements and regards stable meanings as effective equivalence classes of \emph{cost-minimization events}.  Both viewpoints emphasize operationally defined structure over a priori metaphysical commitments, and both isolate exactly which axioms must be validated when connecting the formalism to an empirical domain.

\subsection{Contributions and what is proved}
Within the ratio-induced model \eqref{eq:intro-ratio} (and the explicit choice \eqref{eq:Jcost} used throughout), we establish the following structural facts under clearly stated hypotheses:

\begin{itemize}
  \item \textbf{Existence.} Under the closedness and coercivity/attainment hypotheses of Theorem~\ref{thm:meaning-exists}, the meaning set $\Mean(s)$ is nonempty for every configuration $s$.

  \item \textbf{Finite-dictionary decision geometry.} For finite ordered dictionaries, decision boundaries are given by geometric means of adjacent object scales, and meanings are locally stable away from these boundaries (Theorem~\ref{thm:geom-boundaries} and Corollary~\ref{cor:stability-away}).

  \item \textbf{Compositionality.} For product symbol/object spaces with separable scales, meaning factorizes componentwise (Theorem~\ref{thm:comp}).

  \item \textbf{Mediation.} {For sequential reference through an intermediate representation, the set of optimal mediator ratios is characterized explicitly in log-coordinates; and whenever the balance-point ratio $b_{\mathrm{geo}}$ is feasible, mediation weakly decreases the total mismatch cost relative to direct reference} (Theorem~\ref{thm:seq-mediator} and Corollary~\ref{cor:mediation-reduces}).
\end{itemize}

\subsection{Organization}
Section~\ref{sec:Jcost} states the axioms for $\Jcost$ and fixes the explicit mismatch functional \eqref{eq:Jcost}.  Section~\ref{sec:costed-spaces} defines costed spaces, ratio-induced reference, and the meaning relation.  Section~\ref{sec:main-theorems} contains the principal theorems, followed by compositionality (Section~\ref{sec:compositionality}), extensions, and examples.

\section{The mismatch functional $\Jcost$}
\label{sec:Jcost}

This section fixes the scalar mismatch functional $\Jcost:(0,\infty)\to[0,\infty)$ used throughout to compare {configuration} and object scales via the ratio-induced cost \eqref{eq:intro-ratio}.  The role of $\Jcost$ here is purely mathematical: it is an explicit penalty for scale mismatch, and no physical, cognitive, or linguistic interpretation is assumed.

\subsection{Standard properties and canonicity}

{The conditions below are recorded as a compact axiom package for the mismatch penalty. They encode inversion symmetry, strict convexity, and a multiplicative compatibility under scale multiplication. After a log change of variables, the compatibility axiom becomes d'Alembert's functional equation, so the resulting class of penalties is classical. We include a tailored derivation in Appendix~\ref{app:dalembert} to keep the manuscript self-contained and to emphasize that the axioms are used only as mathematical assumptions, not as a claim of novelty.}

\begin{definition}[Cost Functional Axioms]\label{def:cost-axioms}
A \emph{mismatch functional} is a function $\Jcost:(0,\infty)\to[0,\infty)$ satisfying:
\begin{enumerate}
    \item \textbf{Normalization}: $\Jcost(1) = 0$.
    \item \textbf{Inversion symmetry}: $\Jcost(x) = \Jcost(x^{-1})$ for all $x > 0$.
    \item \textbf{Strict convexity}: $\Jcost$ is strictly convex on $(0,\infty)$.
    \item \textbf{Multiplicative d'Alembert identity}: for all $x,y>0$,
    \begin{equation}\label{eq:dalembert}
        \Jcost(xy) + \Jcost(x/y) = 2\Jcost(x) + 2\Jcost(y) + 2\Jcost(x)\Jcost(y).
    \end{equation}
\end{enumerate}
\end{definition}

Coercivity is not included in Definition~\ref{def:cost-axioms}; it is a property of the explicit choice \eqref{eq:Jcost} used later when proving attainment/existence of minimizers.

\noindent\emph{We first record a basic consequence used repeatedly: under strict convexity, the normalization point $x=1$ is the unique zero of $\Jcost$.}

\begin{lemma}[Uniqueness of the zero-cost point]\label{lem:Jcost-zero}
If $\Jcost$ satisfies Definition~\ref{def:cost-axioms}, then $\Jcost(x)=0$ implies $x=1$.
\end{lemma}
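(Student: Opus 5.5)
The argument uses only normalization, nonnegativity, and strict convexity; inversion symmetry and the d'Alembert identity are not needed. We argue by contradiction. Suppose $\Jcost(x_0)=0$ for some $x_0\neq 1$.

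Since $\Jcost(1)=0$, we have two distinct points $1$ and $x_0$ where $\Jcost$ takes the value $0$. Set $m:=\tfrac12(1+x_0)$. This point lies in $(0,\infty)$ and is different from both endpoints.

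Strict convexity on $(0,\infty)$, applied to the distinct points $1$ and $x_0$ with weight $\tfrac12$, gives
\[
\Jcost(m)<\tfrac12\Jcost(1)+\tfrac12\Jcost(x_0)=0.
\]
But $\Jcost$ maps into $[0,\infty)$, so $\Jcost(m)\ge 0$. This is a contradiction, and therefore $x_0=1$.

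There is no real obstacle here. The only point to check is that the strict convexity inequality is applied to two genuinely distinct points, which is exactly what the hypothesis $x_0\neq1$ provides.
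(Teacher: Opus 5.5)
Your proof is correct and follows essentially the same route as the paper's, which also uses only nonnegativity, normalization, and strict convexity. The paper simply cites the fact that a strictly convex function has at most one minimizer, and your midpoint argument (two distinct zeros would force $\Jcost(m)<0$) is the explicit proof of that fact in this setting.
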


\begin{proof}
Since $\Jcost:(0,\infty)\to[0,\infty)$, we have $\Jcost\ge 0$ on $(0,\infty)$, and by normalization (1) we have $\Jcost(1)=0$. Hence the minimum value is $0$ and it is attained at $x=1$. By strict convexity (3), the minimizer is unique. Therefore $\Jcost(x)=0$ forces $x=1$.
\end{proof}

\subsection{The explicit choice used in this paper}

\begin{definition}[The functional fixed below]\label{def:Jcost}
In the remainder of this paper we fix the explicit functional
\begin{equation}\label{eq:Jcost}
\Jcost(x)=\tfrac12(x+x^{-1})-1=\tfrac{(x-1)^2}{2x}\qquad(x>0).
\end{equation}
\end{definition}

\noindent The next proposition verifies that the explicit functional indeed satisfies the axioms, so subsequent sections can treat Definition~\ref{def:cost-axioms} as established.

\begin{proposition}[Verification of the axioms]\label{prop:Jcost-axioms}
The function \eqref{eq:Jcost} satisfies Definition~\ref{def:cost-axioms}.
\end{proposition}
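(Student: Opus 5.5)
We check the four axioms of Definition~\ref{def:cost-axioms} directly for $\Jcost(x)=\tfrac12(x+x^{-1})-1$, working in logarithmic coordinates wherever that simplifies the algebra. First we record that $\Jcost$ maps into $[0,\infty)$: the second form in \eqref{eq:Jcost}, $\Jcost(x)=(x-1)^2/(2x)$, is a square divided by a positive number, hence nonnegative. Normalization then follows at once, since $\Jcost(1)=\tfrac12(1+1)-1=0$. Inversion symmetry is immediate because the expression $x+x^{-1}$ is invariant under $x\mapsto x^{-1}$.

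For strict convexity we compute the second derivative on $(0,\infty)$. We have $\Jcost'(x)=\tfrac12(1-x^{-2})$ and $\Jcost''(x)=x^{-3}>0$ for all $x>0$. A twice-differentiable function with strictly positive second derivative on an open interval is strictly convex. No subtlety arises here, and the check does not depend on the other axioms.

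The only step requiring real computation is the multiplicative d'Alembert identity \eqref{eq:dalembert}. The cleanest route is the substitution $x=e^u$, $y=e^v$, which gives $1+\Jcost(e^u)=\cosh u$. After adding $2$ to both sides, \eqref{eq:dalembert} becomes
\[
\bigl(1+\Jcost(xy)\bigr)+\bigl(1+\Jcost(x/y)\bigr)=2\bigl(1+\Jcost(x)\bigr)\bigl(1+\Jcost(y)\bigr).
\]
To confirm this rearrangement, expand the right side: $2(1+\Jcost(x))(1+\Jcost(y))=2+2\Jcost(x)+2\Jcost(y)+2\Jcost(x)\Jcost(y)$. The displayed identity is therefore exactly \eqref{eq:dalembert} with $2$ added to each side. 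In the new variables it reads $\cosh(u+v)+\cosh(u-v)=2\cosh u\cosh v$, which is the standard sum-to-product formula. It can be verified by expanding $\cosh(u\pm v)=\cosh u\cosh v\pm\sinh u\sinh v$.

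Alternatively, the identity can be checked directly in multiplicative form. The left side is $\tfrac12(xy+x^{-1}y^{-1}+xy^{-1}+x^{-1}y)-2$. The right side is $\tfrac12(x+x^{-1})(y+y^{-1})-2$, which expands to the same expression. We expect no genuine obstacle in this proof. The main thing to get right is the bookkeeping of the constant terms when passing between $\Jcost$ and $1+\Jcost$.
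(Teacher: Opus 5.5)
Your proposal is correct and follows essentially the same route as the paper: nonnegativity from $(x-1)^2/(2x)$, strict convexity from $\Jcost''(x)=x^{-3}>0$, and the d'Alembert identity by passing to $1+\Jcost$ and checking the product identity by direct expansion. Your alternative multiplicative check is precisely the paper's computation, and the $\cosh$ substitution is the same identity written in log coordinates.
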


\begin{proof}
{Normalization and inversion symmetry are immediate from \eqref{eq:Jcost}, and \eqref{eq:Jcost} shows $\Jcost(x)\ge 0$ for all $x>0$.}  Differentiating $\Jcost(x)=\tfrac12(x+x^{-1})-1$ gives
\[
\Jcost'(x)=\tfrac12-\tfrac{1}{2x^2},\qquad \Jcost''(x)=\tfrac{1}{x^3}>0\quad(x>0),
\]
so $\Jcost$ is strictly convex on $(0,\infty)$.  {For (4),} set $C(x)=1+\Jcost(x)=\tfrac12(x+x^{-1})$.  Then
\[
C(xy)+C(x/y)=\tfrac12\Big(xy+\tfrac{1}{xy}+\tfrac{x}{y}+\tfrac{y}{x}\Big)=\tfrac12\big(x+\tfrac1x\big)\big(y+\tfrac1y\big)=2C(x)C(y),
\]
which is equivalent to \eqref{eq:dalembert} after substituting $C=1+\Jcost$ and expanding.
\end{proof}

\begin{proposition}[Classical characterization of $\Jcost$]\label{prop:Jcost-characterization}
{Assume $\Jcost:(0,\infty)\to[0,\infty)$ satisfies Definition~\ref{def:cost-axioms}. Then there exists a constant $a>0$ such that for all $x>0$,
\[
  \Jcost(x)=\cosh(a\log x)-1=\tfrac12\big(x^{a}+x^{-a}\big)-1.
\]
Moreover, if we replace the scale maps by $\tilde\iota_S:=\iota_S^{a}$ and $\tilde\iota_O:=\iota_O^{a}$, then the ratio-induced model with parameter $a$ becomes the same model written with parameter $1$. Consequently, one may take $a=1$ without loss of generality at the level of the induced reference costs.}
\end{proposition}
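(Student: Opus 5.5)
The plan is to pass to logarithmic coordinates, where the multiplicative identity becomes the classical d'Alembert equation. Set $f(u):=1+\Jcost(e^u)$ for $u\in\R$. Substituting $x=e^u$, $y=e^v$ into \eqref{eq:dalembert} and adding $2$ to both sides gives
\[
f(u+v)+f(u-v)=2+2\Jcost(e^u)+2\Jcost(e^v)+2\Jcost(e^u)\Jcost(e^v)=2f(u)f(v),
\]
so $f$ solves the additive d'Alembert equation on $\R$. Normalization gives $f(0)=1$. Inversion symmetry gives $f(-u)=f(u)$, so $f$ is even. The range condition $\Jcost\ge 0$ gives $f\ge 1$.

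Next we need a regularity hypothesis, which the classical classification requires (without it there are wild solutions). A convex function on an open interval is continuous. Hence $\Jcost$ is continuous on $(0,\infty)$, and therefore $f$ is continuous on $\R$. We then invoke the classical theorem (Aczél; Kannappan): every continuous solution $f:\R\to\R$ of $f(u+v)+f(u-v)=2f(u)f(v)$ is one of $f\equiv 0$, $f(u)=\cos(bu)$, or $f(u)=\cosh(au)$ with $a\ge 0$. The appendix derivation mentioned in the paper could be substituted here for self-containment. Its route is to show $f$ is $C^2$ by integrating the equation against a mollifier in $v$, then differentiate twice in $v$ at $v=0$ to get $f''=f''(0)f$ with $f(0)=1$, $f'(0)=0$.

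Now eliminate the unwanted cases.
\begin{itemize}
\item $f\equiv 0$ contradicts $f(0)=1$.
\item $\cos(bu)$ with $b\neq 0$ takes values below $1$, contradicting $f\ge 1$.
\item The constant solution $f\equiv 1$ (the case $b=0$ or $a=0$) gives $\Jcost\equiv 0$, which is not strictly convex.
\end{itemize}
Hence $f(u)=\cosh(au)$ with $a>0$. Undoing the substitution, $\Jcost(x)=\cosh(a\log x)-1=\tfrac12(x^a+x^{-a})-1$.

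For the reparametrization claim, compute
\[
\Jcost_a\!\big(\iota_S(s)/\iota_O(o)\big)=\tfrac12\big(r^{a}+r^{-a}\big)-1,\qquad r=\iota_S(s)/\iota_O(o).
\]
Since $r^a=\tilde\iota_S(s)/\tilde\iota_O(o)$, this equals $\Jcost_1\big(\tilde\iota_S(s)/\tilde\iota_O(o)\big)$. The new scales are still positive maps, so the induced costs $c(s,o)$ coincide pointwise and $a=1$ may be assumed.

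The main obstacle is the regularity and classification step. Its delicate points are that continuity must come from convexity (not be assumed separately), and that the degenerate solutions must be ruled out using strict convexity and nonnegativity rather than just $f(0)=1$.
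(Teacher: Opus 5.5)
Your proof is correct and follows essentially the same route as the paper: pass to $f(u)=1+\Jcost(e^u)$ to get the additive d'Alembert equation, obtain continuity from convexity, invoke the classical classification of continuous solutions, rule out cosine via $f\ge 1$ and the constant solution via strict convexity, then absorb $a$ into the scale maps. One aside is inaccurate: the paper's appendix does not run a mollifier argument but simply cites Acz\'{e}l and Kuczma for the classification, so your description of its route is off, though this does not affect your proof's correctness.
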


\begin{proof}
{See Appendix~\ref{app:dalembert}.}
\end{proof}

\begin{example}[Small-mismatch regime]
For $|u|\ll 1$ one has
\[
\Jcost(1+u)=\frac{u^2}{2}+O(u^3),
\]
so near balance the mismatch cost behaves like a quadratic penalty in the relative deviation.
\end{example}

\noindent \emph{The next remark explains how the axioms and their classical characterization are used in the rest of the paper.}

\begin{remark}
{The later results use only the explicit mismatch penalty \eqref{eq:Jcost} (equivalently, the axiom package in Definition~\ref{def:cost-axioms} which it satisfies). The characterization Proposition~\ref{prop:Jcost-characterization} is included to record canonicity; it is not presented as a new functional-equation result. Any additional interpretation (physical, cognitive, or linguistic) would require hypotheses beyond those stated here.}
\end{remark}

\section{Costed spaces and reference structures}
\label{sec:costed-spaces}

We now formalize the axioms of the model introduced in Section~\ref{sec:introduction}.  Throughout, the mismatch functional $\Jcost$ is fixed as in Section~\ref{sec:Jcost}.  The intent is to make precise which pieces of data are inputs ({configuration/object} spaces and their scale maps) and which pieces are derived (reference costs and meaning).

\subsection{Costed spaces}

\begin{definition}[Costed space]
Fix a mismatch functional $\Jcost:(0,\infty)\to[0,\infty)$ (Section~\ref{sec:Jcost}).  A \emph{costed space} is a triple $(C,J_C,\iota_C)$ consisting of:
\begin{itemize}
\item a set $C$ of configurations,
\item a map $\iota_C:C\to\R_{>0}$ called the \emph{scale map},
\item a cost function $J_C:C\to\R_{\ge 0}$ satisfying $J_C(c)=\Jcost(\iota_C(c))$ for all $c\in C$.
\end{itemize}
Equivalently, once $\iota_C$ is fixed, $J_C$ is determined by $\Jcost$; we retain $J_C$ in the notation since later statements compare {configuration} costs and object costs directly.
\end{definition}

\begin{notation}
We write $\Sym=(S,J_S,\iota_S)$ for a {configuration (token)} costed space and $\Obj=(O,J_O,\iota_O)$ for an object costed space.

Throughout, we identify $\R_{>0}$ with $(0,\infty)$ and equip $\R_{>0}$ (and $(\R_{>0})^d$) with the usual Euclidean topology on $(0,\infty)$ (equivalently, the Euclidean subspace topology inherited from $\R$). Accordingly, when we say that a set $Y\subset\R_{>0}$ is \emph{closed}, we mean \emph{closed in the usual topology on $(0,\infty)$} (equivalently, $Y=(0,\infty)\cap F$ for some closed $F\subset\R$). Likewise, for $Y\subset(\R_{>0})^d$ the term \emph{closed} means \emph{closed in the usual topology on $(0,\infty)^d$}.
\end{notation}

\begin{example}[Ratio space]
The canonical example is $C=\R_{>0}$ with $\iota_C=\mathrm{id}$ and $J_C=\Jcost$.
\end{example}

\noindent The next example isolates a small neighborhood of the balanced point; it will serve as a convenient test class for stability statements.

\begin{example}[Near-balanced configurations]
For $0<\epsilon<1$ let $C_\epsilon:=\{x\in\R_{>0}:|x-1|<\epsilon\}=(1-\epsilon,1+\epsilon)$.  Then every $c\in C_\epsilon$ satisfies
\[
J_C(c)=\Jcost(c)\le \max\{\Jcost(1-\epsilon),\,\Jcost(1+\epsilon)\}=\Jcost(1-\epsilon)=\Jcost\bigl((1-\epsilon)^{-1}\bigr).
\]
\end{example}

\subsection{Reference structures}

\begin{definition}[Reference structure]\label{def:ref-struct}
A reference structure from $\Sym$ to $\Obj$ is a function
\begin{equation}
    c_\RefStruct : S \times O \to \R_{\ge 0},
\end{equation}
called the \emph{reference cost}.  It assigns to each pair $(s,o)$ the cost of using $s$ to refer to $o$.
\end{definition}

\noindent In the remainder of the paper we focus on the ratio-induced costs generated by $\Jcost$ and the scale maps.

\begin{definition}[Ratio-induced reference]
Given scale maps $\iota_S$ and $\iota_O$, the \emph{ratio-induced} reference structure is defined by
\begin{equation}\label{eq:ratio-ref}
    c_\RefStruct^\Jcost(s,o):=\Jcost\!\Big(\frac{\iota_S(s)}{\iota_O(o)}\Big).
\end{equation}
This is the cost used in the Introduction (Eq.~\ref{eq:intro-ratio}).
\end{definition}

\paragraph{Link to comparative recognizers.}
The ratio-induced reference cost \eqref{eq:ratio-ref} can be viewed as a specific instantiation of a \emph{comparative recognizer} in the sense of Recognition Geometry \cite[Axiom~5 (RG4)]{washburn2026RG}.  In that framework, a comparative recognizer maps \emph{pairs} of configurations to an \emph{event space} \cite[Axiom~2 (RG1)]{washburn2026RG} so as to induce comparative structure (order/distance) from observable events.  Here the ``event'' is the scalar mismatch value $\Jcost(\iota_S(s)/\iota_O(o))$, and the induced \emph{indistinguishability} relation \cite[Def.~4]{washburn2026RG} corresponds to the zero-cost condition $\Jcost(\iota_S(s)/\iota_O(o))=0$, which forces exact scale match $\iota_S(s)=\iota_O(o)$ by Lemma~\ref{lem:Jcost-zero}.

\noindent The following admissibility condition specifies when the reference cost is exactly the canonical ratio penalty.

\begin{definition}[Admissible reference structure]\label{def:admissible-ref-struct}
A reference structure $\RefStruct$ from $\Sym$ to $\Obj$ is called \emph{admissible} (with respect to $\Jcost$ and the scale maps $\iota_S,\iota_O$) if it is ratio-induced, i.e.
\begin{equation}
c_\RefStruct(s,o)=\Jcost\!\Big(\frac{\iota_S(s)}{\iota_O(o)}\Big)\qquad\forall (s,o)\in S\times O.
\end{equation}
Unless stated otherwise, we work with admissible reference structures.
\end{definition}

\noindent Admissibility transfers the symmetry properties of \Jcost to the reference cost; we record this for later use.

\begin{proposition}[Inversion symmetry of the reference cost]\label{prop:ref-sym}
If $\RefStruct$ is admissible, then for all $(s,o)\in S\times O$ one has
\[
c_\RefStruct(s,o)=\Jcost\!\Big(\frac{\iota_S(s)}{\iota_O(o)}\Big)=\Jcost\!\Big(\frac{\iota_O(o)}{\iota_S(s)}\Big).
\]
\end{proposition}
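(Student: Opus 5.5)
The plan is to unfold the definitions and then apply inversion symmetry from Definition~\ref{def:cost-axioms}. The first equality in the statement is exactly the admissibility condition of Definition~\ref{def:admissible-ref-struct}: for any $(s,o)\in S\times O$ we have $c_\RefStruct(s,o)=\Jcost(\iota_S(s)/\iota_O(o))$. Nothing needs to be proved there beyond recalling the definition.

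For the second equality, I set $x:=\iota_S(s)/\iota_O(o)$. Because $\iota_S$ and $\iota_O$ take values in $\R_{>0}$, this ratio is well defined and $x\in(0,\infty)$. Its reciprocal is $x^{-1}=\iota_O(o)/\iota_S(s)$, which also lies in $(0,\infty)$. Axiom (2) of Definition~\ref{def:cost-axioms} gives $\Jcost(x)=\Jcost(x^{-1})$ for every $x>0$. Substituting this $x$ yields the claimed identity.

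The only point needing care is which $\Jcost$ is in force. Section~\ref{sec:costed-spaces} fixes $\Jcost$ as in \eqref{eq:Jcost}, and Proposition~\ref{prop:Jcost-axioms} shows that this choice satisfies the axioms, including inversion symmetry. For any other mismatch functional obeying Definition~\ref{def:cost-axioms}, axiom (2) applies directly. There is no real obstacle: the argument is a direct substitution, and the one thing to check is that both ratios stay in the domain $(0,\infty)$, which follows from positivity of the scale maps.
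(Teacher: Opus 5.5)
Your proposal is correct and follows the paper's proof: the first equality is the admissibility condition, and the second is inversion symmetry, Definition~\ref{def:cost-axioms}(2), applied to the positive ratio $\iota_S(s)/\iota_O(o)$. Your check that both ratios lie in $(0,\infty)$ because the scale maps are positive is a detail the paper leaves implicit.
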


\begin{proof}
Immediate from admissibility and inversion symmetry $\Jcost(x)=\Jcost(x^{-1})$ (Definition~\ref{def:cost-axioms}(2)).
\end{proof}

\subsection{{Meaning and the symbol predicate}}

\begin{definition}[Meaning]\label{def:meaning}
Let $\RefStruct$ be a reference structure from $\Sym$ to $\Obj$.  A {configuration} $s\in S$ \emph{means} an object $o\in O$, written $\Mean_\RefStruct(s,o)$, if $o$ minimizes the reference cost among all objects:
\begin{equation}
    \Mean_\RefStruct(s,o)\iff \forall o'\in O,\quad c_\RefStruct(s,o)\le c_\RefStruct(s,o').
\end{equation}
\end{definition}

For each $s\in S$ we write
\[
    \Mean_\RefStruct(s):=\{o\in O:\Mean_\RefStruct(s,o)\}
\]
for the (possibly multi-valued) meaning set.  If $\RefStruct$ is admissible, then equivalently
\[
    \Mean_\RefStruct(s)=\operatorname*{arg\,min}_{o\in O}\ \Jcost\!\Big(\frac{\iota_S(s)}{\iota_O(o)}\Big).
\]

\begin{definition}[Symbol]\label{def:symbol}
Let $\RefStruct$ be a reference structure from $\Sym$ to $\Obj$.  A configuration $s\in S$ is a \emph{symbol} for an object $o\in O$ (relative to $\RefStruct$) if:
\begin{enumerate}
\item \textbf{Reference:} $\Mean_\RefStruct(s,o)$.
\item \textbf{Compression:} $J_S(s) < J_O(o)$.
\end{enumerate}
\end{definition}

The compression requirement is a modeling assumption: it enforces that symbols are lower-cost encodings than their referents in the common currency induced by $\Jcost$.  No empirical interpretation is asserted; the condition is simply part of the definition used in later results.

\section{Main theorems}
\label{sec:main-theorems}

This section collects the main mathematical consequences of the ratio-induced reference model.  Throughout we fix the explicit mismatch functional
\begin{equation}\label{eq:Jcost-explicit-reminder}
\Jcost(x)=\frac{(x-1)^2}{2x}=\tfrac12\bigl(x+x^{-1}\bigr)-1\qquad (x>0)
\end{equation}
(which was verified to satisfy the axioms in Section~\ref{sec:Jcost}), and we assume the reference structure is \emph{admissible} (Definition~\ref{def:admissible-ref-struct}):
\begin{equation}\label{eq:main-admissible}
  c_{\RefStruct}(s,o)=\Jcost\!\left(\frac{\iota_S(s)}{\iota_O(o)}\right).
\end{equation}
Thus, for each $s\in S$, the meaning set $\Mean_{\RefStruct}(s)$ is the set of minimizers of $o\mapsto \Jcost(\iota_S(s)/\iota_O(o))$.

\subsection{Sublevel geometry of the explicit mismatch cost}

\begin{lemma}[Sublevel intervals]\label{lem:sublevel-interval}
Assume $\Jcost$ is given by \eqref{eq:Jcost} (equivalently \eqref{eq:Jcost-explicit-reminder}). For each $\epsilon>0$, the sublevel set
\[
L_\epsilon:=\{x\in \R_{>0}:\ \Jcost(x)\le \epsilon\}
\]
coincides with the closed interval $[a_\epsilon,b_\epsilon]$, where
\[
  b_\epsilon:=(1+\epsilon)+\sqrt{\epsilon(2+\epsilon)},\qquad
  a_\epsilon:=(1+\epsilon)-\sqrt{\epsilon(2+\epsilon)}=\frac{1}{b_\epsilon}.
\]
\end{lemma}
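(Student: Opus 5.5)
The plan is to reduce the sublevel condition to a quadratic inequality by clearing the positive denominator. For $x>0$, using the form $\Jcost(x)=\tfrac{(x-1)^2}{2x}$ from \eqref{eq:Jcost}, the inequality $\Jcost(x)\le\epsilon$ is equivalent (multiply by $2x>0$, which preserves the direction) to
\[
(x-1)^2\le 2\epsilon x \iff q(x):=x^2-2(1+\epsilon)x+1\le 0 .
\]
So $L_\epsilon=\{x>0: q(x)\le 0\}$, and I will describe this set through the roots of $q$.

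Next I compute the roots by the quadratic formula. They are $x=(1+\epsilon)\pm\sqrt{(1+\epsilon)^2-1}=(1+\epsilon)\pm\sqrt{\epsilon(2+\epsilon)}$, which are exactly $b_\epsilon$ and $a_\epsilon$. The discriminant $\epsilon(2+\epsilon)$ is strictly positive for $\epsilon>0$, so the roots are real and distinct, with $a_\epsilon<b_\epsilon$. Since $q$ is monic, $q\le 0$ holds precisely on the closed interval $[a_\epsilon,b_\epsilon]$.

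Two remaining checks place this interval inside $(0,\infty)$ and give the reciprocal relation. By Vieta, $a_\epsilon b_\epsilon=1$, the constant term of $q$, so $a_\epsilon=1/b_\epsilon$. Moreover $b_\epsilon>1+\epsilon>0$, so $a_\epsilon>0$. Hence $[a_\epsilon,b_\epsilon]\subset(0,\infty)$, the intersection with $\R_{>0}$ leaves the interval unchanged, and $L_\epsilon=[a_\epsilon,b_\epsilon]$.

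No real obstacle is expected. The only points needing care are that the multiplication by $2x$ is legitimate because $x>0$, and that positivity of $a_\epsilon$ (from Vieta) ensures the interval lies entirely within the domain. As a cross-check, inversion symmetry (Definition~\ref{def:cost-axioms}(2)) forces $L_\epsilon$ to be invariant under $x\mapsto 1/x$, which is consistent with $a_\epsilon=1/b_\epsilon$.
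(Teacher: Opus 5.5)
Your proof is correct and follows the paper's argument essentially step for step: clear the denominator $2x>0$, solve $x^2-2(1+\epsilon)x+1\le 0$ via its two real roots, and use the product of the roots to get $a_\epsilon b_\epsilon=1$. Your explicit check that $a_\epsilon>0$, so that the interval lies inside $(0,\infty)$, is a small point the paper leaves implicit.
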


\begin{proof}
Using $\Jcost(x)=\frac{(x-1)^2}{2x}$, the inequality $\Jcost(x)\le \epsilon$ is equivalent (after multiplying by $2x>0$) to
\[
(x-1)^2\le 2\epsilon x
\ \Longleftrightarrow\
 x^2-2(1+\epsilon)x+1\le 0.
\]
The quadratic has discriminant $\Delta=4\epsilon(2+\epsilon)$ and roots
$x_\pm=(1+\epsilon)\pm\sqrt{\epsilon(2+\epsilon)}$.  Since it opens upward, the inequality holds exactly for $x\in[x_-,x_+]$.  Set $a_\epsilon:=x_-$ and $b_\epsilon:=x_+$.  Then $a_\epsilon b_\epsilon=(1+\epsilon)^2-\epsilon(2+\epsilon)=1$, so $a_\epsilon=1/b_\epsilon$.
\end{proof}

\subsection{Meaning constraints from a balanced baseline}

\begin{theorem}[Scale window for meanings of low-cost {configurations}]\label{thm:low-cost-meaning}
Assume $1\in Y:=\iota_O(O)$ and choose $o_0\in O$ with $\iota_O(o_0)=1$.  Let $s\in S$ and let $o\in \Mean_{\RefStruct}(s)$.
Then
\begin{equation}\label{eq:mean-cost-bound}
  c_{\RefStruct}(s,o)\le c_{\RefStruct}(s,o_0)=\Jcost(\iota_S(s))=J_S(s).
\end{equation}
In particular, for every $\epsilon>0$, if $J_S(s)\le \epsilon$ then
\begin{equation}\label{eq:ratio-window}
  \frac{\iota_S(s)}{\iota_O(o)}\in [a_\epsilon,b_\epsilon]
\end{equation}
and hence
\begin{equation}\label{eq:object-scale-window}
  \frac{\iota_S(s)}{b_\epsilon}\le \iota_O(o)\le \frac{\iota_S(s)}{a_\epsilon},
\end{equation}
where $[a_\epsilon,b_\epsilon]$ is as in Lemma~\ref{lem:sublevel-interval}.
\end{theorem}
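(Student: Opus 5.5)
The argument is short and uses only the definition of meaning, the explicit form of $\Jcost$, and Lemma~\ref{lem:sublevel-interval}. I will carry it out in three steps.

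\textbf{Step 1: the cost bound \eqref{eq:mean-cost-bound}.} Since $o\in\Mean_{\RefStruct}(s)$, Definition~\ref{def:meaning} gives $c_{\RefStruct}(s,o)\le c_{\RefStruct}(s,o')$ for every $o'\in O$. I will apply this with $o'=o_0$, which exists because $1\in Y$. By admissibility \eqref{eq:main-admissible} and $\iota_O(o_0)=1$,
\[
c_{\RefStruct}(s,o_0)=\Jcost\bigl(\iota_S(s)/1\bigr)=\Jcost(\iota_S(s)).
\]
This equals $J_S(s)$ by the definition of a costed space, since $J_S=\Jcost\circ\iota_S$.

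\textbf{Step 2: the ratio window \eqref{eq:ratio-window}.} Fix $\epsilon>0$ and suppose $J_S(s)\le\epsilon$. Step~1 then gives $\Jcost\bigl(\iota_S(s)/\iota_O(o)\bigr)\le\epsilon$. Because scale maps take values in $\R_{>0}$, the ratio $x:=\iota_S(s)/\iota_O(o)$ is a positive real. Hence $x$ lies in the sublevel set $L_\epsilon$, and Lemma~\ref{lem:sublevel-interval} identifies $L_\epsilon$ with $[a_\epsilon,b_\epsilon]$.

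\textbf{Step 3: the object-scale window \eqref{eq:object-scale-window}.} From $a_\epsilon\le \iota_S(s)/\iota_O(o)\le b_\epsilon$, I will invert the inequalities using positivity. Lemma~\ref{lem:sublevel-interval} gives $a_\epsilon=1/b_\epsilon$ and $b_\epsilon>0$, so $a_\epsilon>0$. Multiplying through by $\iota_O(o)>0$ and dividing by $a_\epsilon$ and $b_\epsilon$ then yields $\iota_S(s)/b_\epsilon\le\iota_O(o)\le\iota_S(s)/a_\epsilon$.

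There is no real obstacle here. The only point needing care is Step~3, where we must confirm $a_\epsilon>0$ so that dividing by it preserves the inequality direction. This follows from $a_\epsilon b_\epsilon=1$ with $b_\epsilon>0$.
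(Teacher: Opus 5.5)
Your proposal is correct and follows the paper's proof essentially step for step: compare with $o_0$ via the definition of meaning, use admissibility to identify $c_{\RefStruct}(s,o_0)=\Jcost(\iota_S(s))=J_S(s)$, apply Lemma~\ref{lem:sublevel-interval}, and rearrange. Your explicit check that $a_\epsilon=1/b_\epsilon>0$ before dividing is a small addition that the paper leaves implicit in the word ``rearranging.''
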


\begin{proof}
Since $o\in\Mean_{\RefStruct}(s)$, by definition $c_{\RefStruct}(s,o)\le c_{\RefStruct}(s,o_0)$.  By admissibility \eqref{eq:main-admissible} and $\iota_O(o_0)=1$,
$c_{\RefStruct}(s,o_0)=\Jcost(\iota_S(s))=J_S(s)$, which gives \eqref{eq:mean-cost-bound}.  If $J_S(s)\le\epsilon$, then \eqref{eq:mean-cost-bound} implies $\Jcost(\iota_S(s)/\iota_O(o))\le\epsilon$, hence \eqref{eq:ratio-window} by Lemma~\ref{lem:sublevel-interval}.  Rearranging yields \eqref{eq:object-scale-window}.
\end{proof}

\begin{corollary}[Near-balanced {configurations} force near-balanced meanings]\label{cor:near-balanced}
Under the hypotheses of Theorem~\ref{thm:low-cost-meaning}, if $J_S(s)\le\epsilon$ and $o\in\Mean_{\RefStruct}(s)$, then
\[
\iota_O(o)\in \Bigl[\frac{1}{b_\epsilon^2},\ b_\epsilon^2\Bigr].
\]
In particular, as $\epsilon\downarrow 0$, any meaning of an $\epsilon$-cheap symbol must satisfy $\iota_O(o)\to 1$.
\end{corollary}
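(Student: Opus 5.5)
The plan is to combine two constraints from Theorem~\ref{thm:low-cost-meaning}: a bound on the configuration scale $\iota_S(s)$ alone, and the object-scale window \eqref{eq:object-scale-window}. Multiplying them out will place $\iota_O(o)$ in $[b_\epsilon^{-2},b_\epsilon^{2}]$.

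First, since $J_S(s)=\Jcost(\iota_S(s))\le\epsilon$, Lemma~\ref{lem:sublevel-interval} applied to $x=\iota_S(s)$ gives $\iota_S(s)\in[a_\epsilon,b_\epsilon]=[b_\epsilon^{-1},b_\epsilon]$, using $a_\epsilon=1/b_\epsilon$. Second, Theorem~\ref{thm:low-cost-meaning} applies because $o\in\Mean_\RefStruct(s)$, $1\in\iota_O(O)$ and $J_S(s)\le\epsilon$, so \eqref{eq:object-scale-window} yields
\[
\frac{\iota_S(s)}{b_\epsilon}\le\iota_O(o)\le\frac{\iota_S(s)}{a_\epsilon}=b_\epsilon\,\iota_S(s).
\]
Inserting the lower bound $\iota_S(s)\ge b_\epsilon^{-1}$ on the left and the upper bound $\iota_S(s)\le b_\epsilon$ on the right gives
\[
b_\epsilon^{-2}\le\iota_O(o)\le b_\epsilon^{2}.
\]

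For the limiting statement, observe that $b_\epsilon=(1+\epsilon)+\sqrt{\epsilon(2+\epsilon)}$ is continuous in $\epsilon\ge0$ with $b_0=1$. Hence $b_\epsilon^{\pm2}\to1$ as $\epsilon\downarrow0$, and the squeeze gives $\iota_O(o)\to1$, uniformly over all such $s$ and meanings $o$.

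No step is a real obstacle. The only point needing care is to use the symmetric form $a_\epsilon=1/b_\epsilon$ so that the two windows compose into the stated symmetric interval.
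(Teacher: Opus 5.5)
Your proposal is correct and follows the paper's proof: both use Lemma~\ref{lem:sublevel-interval} to place $\iota_S(s)$ in $[a_\epsilon,b_\epsilon]$, then combine this with the window \eqref{eq:object-scale-window} via $a_\epsilon=1/b_\epsilon$. Your justification of the limiting statement, via continuity of $b_\epsilon$ with $b_\epsilon\to 1$ as $\epsilon\downarrow 0$, is a small addition that the paper leaves implicit.
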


\begin{proof}
From $J_S(s)=\Jcost(\iota_S(s))\le\epsilon$ and Lemma~\ref{lem:sublevel-interval} we have $\iota_S(s)\in[a_\epsilon,b_\epsilon]$.  Combining this with \eqref{eq:object-scale-window} and $a_\epsilon=1/b_\epsilon$ gives the stated bounds.
\end{proof}

\subsection{Existence of meanings under attainment hypotheses}

\begin{lemma}[Coercivity of $\Jcost$]\label{lem:Jcost-coercive}
Assume $\Jcost$ is given by \eqref{eq:Jcost}. Then $\Jcost(x)\to\infty$ as $x\to 0^+$ and as $x\to\infty$.  In particular, for each $M\ge 0$ the sublevel set $\{x\in\R_{>0}:\ \Jcost(x)\le M\}$ is compact in $\R$.
\end{lemma}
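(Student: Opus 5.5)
The plan is to read both limits off the explicit formula \eqref{eq:Jcost} and then obtain compactness of the sublevel sets from Lemma~\ref{lem:sublevel-interval}.

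\emph{Limits.} Write $\Jcost(x)=\tfrac12(x+x^{-1})-1$. Since $x^{-1}>0$ for $x>0$, we have the lower bound $\Jcost(x)\ge \tfrac12 x-1$, which tends to $\infty$ as $x\to\infty$. Similarly $\Jcost(x)\ge \tfrac12 x^{-1}-1$, which tends to $\infty$ as $x\to0^+$. One could instead use inversion symmetry, $\Jcost(x)=\Jcost(x^{-1})$, to reduce the case $x\to0^+$ to the case $x\to\infty$; either route is immediate.

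\emph{Compactness.} For $M>0$, Lemma~\ref{lem:sublevel-interval} with $\epsilon=M$ identifies the sublevel set $\{x\in\R_{>0}:\Jcost(x)\le M\}$ with the interval $[a_M,b_M]$. Here $a_M=1/b_M>0$, so this is a closed bounded interval contained in $(0,\infty)$, and it is compact in $\R$ by Heine--Borel. The case $M=0$ is not covered by the lemma, which assumes $\epsilon>0$. It is handled separately: by Lemma~\ref{lem:Jcost-zero} together with normalization, the sublevel set at level $0$ is exactly $\{1\}$, which is compact.

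\emph{Main subtlety.} There is no real obstacle. The one point needing care is that the sublevel set must be compact in $\R$, not merely closed in $(0,\infty)$. Coercivity as $x\to0^+$ is exactly what rules out sequences in the set accumulating at $0$, and this is reflected in the strict inequality $a_M>0$.

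Alternatively, one can avoid the explicit interval altogether. The set is closed in $(0,\infty)$ because $\Jcost$ is continuous. The two limits provide $0<\delta<R$ such that the set is contained in $[\delta,R]$. A subset of $[\delta,R]$ that is closed in $(0,\infty)$ is closed in $\R$, since $[\delta,R]$ is itself closed in $\R$, and therefore the set is compact.
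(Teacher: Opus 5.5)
Your proof is correct. For the limits you argue as the paper does, and your lower bounds $\Jcost(x)\ge\tfrac12x-1$ and $\Jcost(x)\ge\tfrac12x^{-1}-1$ make precise the paper's remark that the terms $\tfrac12x$ and $\tfrac12x^{-1}$ dominate.

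For compactness your main route differs from the paper's. You invoke Lemma~\ref{lem:sublevel-interval} to identify the sublevel set exactly as $[a_M,b_M]$ with $a_M=1/b_M>0$. Because that lemma is stated only for $\epsilon>0$, this forces a separate treatment of $M=0$. The paper instead uses a crude bound: $\Jcost(x)\le M$ gives $x+x^{-1}\le 2(M+1)$, hence $x\le 2(M+1)$ and $x^{-1}\le 2(M+1)$. This places the set inside $[1/(2(M+1)),\,2(M+1)]$, and the paper then concludes from closedness. That is essentially your alternative argument, with explicit values of $\delta$ and $R$.

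The paper's route is self-contained and uniform in $M\ge 0$: it needs no case split and does not rely on the quadratic computation behind Lemma~\ref{lem:sublevel-interval}. Your main route identifies the sublevel set exactly, not just a box containing it. Your alternative also spells out a step the paper leaves implicit: a set that is closed in $(0,\infty)$ and contained in a compact interval $[\delta,R]\subset(0,\infty)$ is closed in $\R$, and hence compact.
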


\begin{proof}
From \eqref{eq:Jcost-explicit-reminder}, $\Jcost(x)=\tfrac12(x+x^{-1})-1$.  As $x\to\infty$ the term $\tfrac12x$ dominates, and as $x\to0^+$ the term $\tfrac12x^{-1}$ dominates, so in both limits $\Jcost(x)\to\infty$.  If $\Jcost(x)\le M$ then $x+x^{-1}\le 2(M+1)$, hence both $x$ and $x^{-1}$ are bounded; the sublevel set is therefore closed and bounded away from $0$ and $\infty$, hence compact.
\end{proof}

\begin{theorem}[Existence of meanings for ratio-induced reference]\label{thm:meaning-exists}
Assume $\RefStruct$ is admissible (Definition~\ref{def:admissible-ref-struct}) and that $\Jcost$ is given by \eqref{eq:Jcost}.  Let $Y:=\iota_O(O)\subset\R_{>0}$ be nonempty and closed{ in the usual topology on $(0,\infty)$}.
Then for every $s\in S$ there exists $o\in O$ such that $\Mean_{\RefStruct}(s,o)$ (equivalently, $\Mean_{\RefStruct}(s)\neq\emptyset$).
Moreover, if $x:=\iota_S(s)\in Y$, then any $o\in O$ with $\iota_O(o)=x$ is a meaning and satisfies $c_{\RefStruct}(s,o)=0$.
\end{theorem}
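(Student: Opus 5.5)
Fix $s\in S$ and set $x:=\iota_S(s)>0$. The plan is to reduce the claim to minimizing a one-variable continuous coercive function over the closed set $Y$. For admissible $\RefStruct$, $\Mean_{\RefStruct}(s)$ consists exactly of the objects $o$ with $\iota_O(o)\in\operatorname*{arg\,min}_{y\in Y} g(y)$, where
\[
g(y):=\Jcost(x/y),\qquad y>0.
\]
So it suffices to show that $g$ attains its infimum on $Y$: if $y^*\in Y$ is a minimizer, any $o$ with $\iota_O(o)=y^*$ (which exists by the definition of $Y$ as the image) satisfies $c_{\RefStruct}(s,o)=g(y^*)\le g(\iota_O(o'))=c_{\RefStruct}(s,o')$ for all $o'\in O$.

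\textbf{Attainment.} Pick any $y_0\in Y$, using nonemptiness, and set $M:=g(y_0)$. By Lemma~\ref{lem:Jcost-coercive}, the sublevel set $K_M:=\{t>0:\Jcost(t)\le M\}$ is a compact interval $[\alpha,\beta]\subset(0,\infty)$; Lemma~\ref{lem:sublevel-interval} gives $[a_M,b_M]$ explicitly when $M>0$, and $K_0=\{1\}$. Then
\[
\{y>0: g(y)\le M\}=\{y>0:\ x/\beta\le y\le x/\alpha\}=[x/\beta,\,x/\alpha],
\]
which is a compact interval contained in $(0,\infty)$. The set $Y\cap[x/\beta,x/\alpha]$ is nonempty because it contains $y_0$.

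\textbf{Main obstacle.} The delicate step is showing that this intersection is compact. Closedness of $Y$ is only relative to $(0,\infty)$, so $Y$ may accumulate at $0$; for example, $Y=(0,1]$ is closed in $(0,\infty)$. The fix is that the interval $[x/\beta,x/\alpha]$ is bounded away from $0$. Write $Y=(0,\infty)\cap F$ with $F$ closed in $\R$. Then
\[
Y\cap[x/\beta,x/\alpha]=F\cap[x/\beta,x/\alpha],
\]
which is closed and bounded in $\R$, hence compact.

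\textbf{Conclusion of existence.} $g$ is continuous on $(0,\infty)$, being a composition of continuous maps with the explicit formula \eqref{eq:Jcost}. By Weierstrass, $g$ attains a minimum $y^*$ on this compact set. Points of $Y$ outside the interval have $g>M\ge g(y^*)$, so $y^*$ minimizes $g$ over all of $Y$. This gives $\Mean_{\RefStruct}(s)\neq\emptyset$.

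\textbf{The ``moreover'' clause.} If $x\in Y$ and $\iota_O(o)=x$, then $c_{\RefStruct}(s,o)=\Jcost(1)=0$ by normalization. Since $c_{\RefStruct}\ge 0$ everywhere, $o$ is a minimizer, i.e.\ a meaning. This part needs no topology.
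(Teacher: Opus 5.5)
Your proof is correct and follows essentially the same route as the paper: coercivity of $\Jcost$ confines the minimization to a region bounded away from $0$ and $\infty$, closedness of $Y$ in $(0,\infty)$ then yields compactness, and continuity gives attainment, while the ``moreover'' clause follows from $\Jcost(1)=0$ and nonnegativity. The only difference is cosmetic: you apply Weierstrass on the compact set $Y\cap[x/\beta,x/\alpha]$, whereas the paper extracts a convergent subsequence from a minimizing sequence, and your explicit treatment of relative closedness (e.g.\ $Y=(0,1]$) is the point the paper relies on when it notes that the minimizing sequence stays bounded away from $0$.
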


\begin{proof}
Fix $s$ and set $x:=\iota_S(s)$. Consider $f:Y\to\R_{\ge 0}$ defined by $f(y):=\Jcost(x/y)$.  The map $f$ is continuous.  By Lemma~\ref{lem:Jcost-coercive}, $f(y)\to\infty$ as $y\to 0^+$ or $y\to\infty$, so the infimum of $f$ over $Y$ is achieved on a compact sublevel set.
Concretely, choose a minimizing sequence $y_n\in Y$ with $f(y_n)\downarrow \inf_Y f$.  Coercivity implies $(y_n)$ is bounded away from $0$ and $\infty$, hence has a convergent subsequence; since $Y$ is closed{ in $(0,\infty)$}, the limit $y_*\in Y$, and continuity gives $f(y_*)=\inf_Y f$.  Choose $o\in O$ with $\iota_O(o)=y_*$.  Then $c_{\RefStruct}(s,o)=f(y_*)\le f(\iota_O(o'))=c_{\RefStruct}(s,o')$ for all $o'\in O$, i.e. $\Mean_{\RefStruct}(s,o)$.
If $x\in Y$, take $y_*=x$; then $\Jcost(x/x)=\Jcost(1)=0$, so any $o$ with $\iota_O(o)=x$ is a meaning with zero reference cost.
\end{proof}

\begin{remark}
If $Y=\iota_O(O)$ is not closed{ in $(0,\infty)$}, the minimum need not be attained; in that case $\Mean_{\RefStruct}(s)$ may be empty even though the infimum exists.
\end{remark}

\subsection{A simple total-cost benchmark}

\begin{theorem}[Balanced reference minimizes the intrinsic+reference sum]\label{thm:optimal-ref}
Assume admissible reference \eqref{eq:main-admissible} and intrinsic costs $J_S(s)=\Jcost(\iota_S(s))$, $J_O(o)=\Jcost(\iota_O(o))$.  Define
\[
C(s,o):=J_S(s)+J_O(o)+c_{\RefStruct}(s,o).
\]
Then $C(s,o)\ge 0$ for all $(s,o)\in S\times O$, and
\[
C(s,o)=0 \iff \iota_S(s)=1\ \text{and}\ \iota_O(o)=1.
\]
In particular, if there exist $s_0\in S$ and $o_0\in O$ with $\iota_S(s_0)=\iota_O(o_0)=1$, then $(s_0,o_0)$ is a global minimizer of $C$ over $S\times O$.
\end{theorem}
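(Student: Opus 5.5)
The argument rests on two earlier facts: $\Jcost\ge 0$ by Definition~\ref{def:cost-axioms} (equivalently $\Jcost(x)=\frac{(x-1)^2}{2x}\ge 0$), and the zero characterization in Lemma~\ref{lem:Jcost-zero}.

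\textbf{Nonnegativity.} The function $C(s,o)$ is a sum of three values of $\Jcost$: at $\iota_S(s)$, at $\iota_O(o)$, and at $\iota_S(s)/\iota_O(o)$. Each term is nonnegative, so $C(s,o)\ge 0$.

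\textbf{Zero characterization, forward direction.} Suppose $C(s,o)=0$. A sum of three nonnegative terms vanishes only if each term vanishes. So
\[
\Jcost(\iota_S(s))=\Jcost(\iota_O(o))=0.
\]
Lemma~\ref{lem:Jcost-zero} then gives $\iota_S(s)=1$ and $\iota_O(o)=1$. For the explicit functional \eqref{eq:Jcost} this is also immediate: $(x-1)^2=0$ forces $x=1$. The third term, $c_{\RefStruct}(s,o)=0$, is not needed for this direction, although it is consistent with the conclusion.

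\textbf{Zero characterization, converse.} Suppose $\iota_S(s)=\iota_O(o)=1$. Normalization gives $J_S(s)=J_O(o)=\Jcost(1)=0$. By admissibility \eqref{eq:main-admissible}, $c_{\RefStruct}(s,o)=\Jcost(1/1)=0$. Hence $C(s,o)=0$.

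\textbf{Global minimizer.} Suppose such $s_0,o_0$ exist. Then $C(s_0,o_0)=0$, and $0\le C(s,o)$ for every pair $(s,o)$. So $(s_0,o_0)$ attains the global minimum of $C$ over $S\times O$.

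None of these steps is a real obstacle. The only point needing care is citing Lemma~\ref{lem:Jcost-zero} for the explicit $\Jcost$. That lemma applies because Proposition~\ref{prop:Jcost-axioms} verifies that \eqref{eq:Jcost} satisfies the axioms.
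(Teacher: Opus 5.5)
Your proof is correct and follows the paper's argument essentially verbatim: nonnegativity of each summand, vanishing of all three terms forcing $\iota_S(s)=\iota_O(o)=1$ via Lemma~\ref{lem:Jcost-zero}, and the converse from $\Jcost(1)=0$. Your explicit treatment of the global-minimizer clause and your note that the third term is not needed for the forward direction are fine additions that the paper leaves implicit.
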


\begin{proof}
Each term in $C$ is nonnegative, hence $C\ge 0$.  If $C(s,o)=0$, then all three terms vanish; by Lemma~\ref{lem:Jcost-zero} this forces $\iota_S(s)=\iota_O(o)=1$.  The converse is immediate from $\Jcost(1)=0$.
\end{proof}

\subsection{A backbone window for near-balanced {configuration} classes}

\begin{definition}[Referential capacity]\label{def:capacity}
Given a reference structure $\RefStruct$ from $\Sym$ to $\Obj$, define the \emph{referential capacity} to be
\[
\mathrm{Cap}(\Sym,\Obj;\RefStruct)
:=\bigl|\{o\in O:\ \exists s\in S\ \text{with}\ o\in\Mean_{\RefStruct}(s)\}\bigr|.
\]
(If $O$ is infinite, this cardinality may be infinite.)
\end{definition}

\noindent We now show that restricting to near-balanced configurations forces all attainable meanings to lie in an explicit scale window.

\begin{theorem}[Backbone window for near-balanced {configurations}]\label{thm:backbone}
Let $\delta\in(0,1)$ and let $\Sym_\delta=(S_\delta,J_\delta,\iota_\delta)$ be the near-balanced ratio space
\[
S_\delta:=\{x\in\R_{>0}:\ |x-1|<\delta\},\qquad \iota_\delta=\mathrm{id},\qquad J_\delta=\Jcost|_{S_\delta}.
\]
Let $\Obj=(O,J_O,\iota_O)$ be a costed space such that $Y:=\iota_O(O)\subset\R_{>0}$ is nonempty, closed{ in the usual topology on $(0,\infty)$}, and contains $1$.  Assume $\RefStruct$ is admissible and $\Jcost$ is given by \eqref{eq:Jcost}.

Set $\epsilon_\delta:=\max\{\Jcost(1-\delta),\,\Jcost(1+\delta)\}=\Jcost(1-\delta)=\Jcost\bigl((1-\delta)^{-1}\bigr)$ and let $[a_{\epsilon_\delta},b_{\epsilon_\delta}]$ be as in Lemma~\ref{lem:sublevel-interval}.  Define the window
\[
I_\delta
:=\left[\frac{1-\delta}{\,b_{\epsilon_\delta}\,},\ \frac{1+\delta}{\,a_{\epsilon_\delta}\,}\right].
\]
Then:
\begin{enumerate}
\item For every $s\in S_\delta$ the meaning set $\Mean_{\RefStruct}(s)$ is nonempty.
\item If $s\in S_\delta$ and $o\in\Mean_{\RefStruct}(s)$, then $\iota_O(o)\in I_\delta$.
Equivalently, if $\iota_O(o)\notin I_\delta$, then no $s\in S_\delta$ can mean $o$ under admissible reference.
\end{enumerate}
In particular,
\[
\mathrm{Cap}(\Sym_\delta,\Obj;\RefStruct)\le \bigl|\{o\in O:\ \iota_O(o)\in I_\delta\}\bigr|.
\]
\end{theorem}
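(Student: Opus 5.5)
The plan is to reduce both claims to results already established: part 1 to Theorem~\ref{thm:meaning-exists}, and part 2 to the scale window of Theorem~\ref{thm:low-cost-meaning} together with a uniform bound on $J_\delta$ over $S_\delta$. The capacity bound will then follow by set inclusion.

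\textbf{Part 1.} Here the hypotheses match Theorem~\ref{thm:meaning-exists} directly: $Y=\iota_O(O)$ is nonempty and closed in $(0,\infty)$, $\RefStruct$ is admissible, and $\Jcost$ is given by \eqref{eq:Jcost}. Applying that theorem to the configuration space $S_\delta$ with $\iota_\delta=\mathrm{id}$ gives $\Mean_{\RefStruct}(s)\neq\emptyset$ for every $s\in S_\delta$.

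\textbf{Uniform cost bound on $S_\delta$.} For part 2 I first bound $J_\delta(s)=\Jcost(s)$ uniformly for $s\in S_\delta$. Since $\Jcost$ is strictly convex with its minimum at $1$, it is decreasing on $(0,1]$ and increasing on $[1,\infty)$. Hence on $(1-\delta,1+\delta)$ we have $\Jcost(s)\le \max\{\Jcost(1-\delta),\Jcost(1+\delta)\}=\epsilon_\delta$.

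Next I justify that this maximum equals $\Jcost(1-\delta)$. Inversion symmetry gives $\Jcost(1-\delta)=\Jcost((1-\delta)^{-1})$. Moreover $(1-\delta)^{-1}\ge 1+\delta$, because $(1-\delta)(1+\delta)=1-\delta^2\le 1$. Monotonicity on $[1,\infty)$ then yields $\Jcost(1+\delta)\le\Jcost((1-\delta)^{-1})=\Jcost(1-\delta)$. This is the only step needing a small argument, and it is routine.

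\textbf{Part 2.} Since $1\in Y$, there is $o_0\in O$ with $\iota_O(o_0)=1$, so Theorem~\ref{thm:low-cost-meaning} applies with $\epsilon=\epsilon_\delta$. Because $J_S(s)\le\epsilon_\delta$, equation \eqref{eq:object-scale-window} gives
\[
s/b_{\epsilon_\delta}\le\iota_O(o)\le s/a_{\epsilon_\delta}.
\]
Using $1-\delta<s<1+\delta$, this becomes
\[
(1-\delta)/b_{\epsilon_\delta}\le \iota_O(o)\le (1+\delta)/a_{\epsilon_\delta},
\]
that is, $\iota_O(o)\in I_\delta$. The contrapositive is the stated equivalent form.

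\textbf{Capacity bound.} Every $o$ counted in $\mathrm{Cap}(\Sym_\delta,\Obj;\RefStruct)$ is a meaning of some $s\in S_\delta$, so by part 2 it satisfies $\iota_O(o)\in I_\delta$. The counted set is therefore a subset of $\{o\in O:\iota_O(o)\in I_\delta\}$, and cardinality is monotone under inclusion, which gives the inequality.

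There is no real obstacle in this argument; the only points needing care are the identification of the maximum defining $\epsilon_\delta$ and the correct use of $a_\epsilon=1/b_\epsilon$ from Lemma~\ref{lem:sublevel-interval}.
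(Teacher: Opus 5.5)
Your proof is correct and follows essentially the same route as the paper. Part 1 goes through Theorem~\ref{thm:meaning-exists}, and Part 2 uses the comparison with $o_0$ from Theorem~\ref{thm:low-cost-meaning}, the uniform bound $\Jcost(s)\le\epsilon_\delta$ from monotonicity on either side of $1$, and the endpoint substitution $s\in(1-\delta,1+\delta)$; your inversion-symmetry argument via $(1-\delta)^{-1}\ge 1+\delta$ also fills in the inequality $\Jcost(1+\delta)\le\Jcost(1-\delta)$, which the paper only asserts.
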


\begin{proof}
(1) is a direct application of Theorem~\ref{thm:meaning-exists} to the closed{ (in $(0,\infty)$)} nonempty set $Y$.

For (2), fix $s\in S_\delta$ and write $x:=\iota_\delta(s)\in(1-\delta,1+\delta)$.  Let $o\in\Mean_{\RefStruct}(s)$ and choose $o_0\in O$ with $\iota_O(o_0)=1$ (possible since $1\in Y$).  By Theorem~\ref{thm:low-cost-meaning},
\[
\Jcost\!\left(\frac{x}{\iota_O(o)}\right)=c_{\RefStruct}(s,o)\le c_{\RefStruct}(s,o_0)=\Jcost(x)\le \epsilon_\delta.
\]
The final bound holds since $x\in(1-\delta,1+\delta)$, $\Jcost$ is strictly decreasing on $(0,1]$ and strictly increasing on $[1,\infty)$, and for $0<\delta<1$ one has $\Jcost(1-\delta)\ge \Jcost(1+\delta)$ (so the maximum of $\Jcost$ on $[1-\delta,1+\delta]$ is attained at $1-\delta$).
Applying Lemma~\ref{lem:sublevel-interval} gives $x/\iota_O(o)\in[a_{\epsilon_\delta},b_{\epsilon_\delta}]$, hence
\[
\frac{x}{b_{\epsilon_\delta}}\le \iota_O(o)\le \frac{x}{a_{\epsilon_\delta}}.
\]
Using $x\in[1-\delta,1+\delta]$ yields $\iota_O(o)\in I_\delta$.

For the capacity bound, any object counted in $\mathrm{Cap}(\Sym_\delta,\Obj;\RefStruct)$ lies in $\Mean_{\RefStruct}(s)$ for some $s\in S_\delta$, hence satisfies $\iota_O(o)\in I_\delta$ by (2).
\end{proof}

\begin{corollary}[Local effectiveness]\label{cor:effectiveness-local}
Assume an admissible (ratio-induced) reference structure $\RefStruct$ and the hypotheses of Theorem~\ref{thm:backbone}. If $s\in S_\delta$ and $o\in\Mean_{\RefStruct}(s)$, then $\iota_O(o)\in I_\delta$.
\end{corollary}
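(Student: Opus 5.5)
The corollary is a restatement of part (2) of Theorem~\ref{thm:backbone}, so the plan is to check that its hypotheses match and then invoke that part. The corollary assumes exactly what the theorem needs. The configuration space is $\Sym_\delta$ with $\iota_\delta=\mathrm{id}$ and $\delta\in(0,1)$. The object scale set $Y=\iota_O(O)$ is nonempty, closed in $(0,\infty)$, and contains $1$. The reference structure $\RefStruct$ is admissible, and $\Jcost$ is the explicit functional \eqref{eq:Jcost}. Under these assumptions, item (2) of Theorem~\ref{thm:backbone} gives $\iota_O(o)\in I_\delta$ for every $s\in S_\delta$ and every $o\in\Mean_{\RefStruct}(s)$.

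For a self-contained argument, I would retrace the three steps behind that item.

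\emph{Step 1: comparison with the balanced object.} Since $1\in Y$, pick $o_0\in O$ with $\iota_O(o_0)=1$. Because $o$ is a meaning of $s$, Theorem~\ref{thm:low-cost-meaning} gives $\Jcost(x/\iota_O(o))\le \Jcost(x)$, where $x=\iota_\delta(s)\in(1-\delta,1+\delta)$.

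\emph{Step 2: bounding $\Jcost(x)$.} We need $\Jcost(x)\le\epsilon_\delta=\Jcost(1-\delta)$. This uses two facts. First, $\Jcost$ is decreasing on $(0,1]$ and increasing on $[1,\infty)$, which follows from $\Jcost'(x)=\tfrac12-\tfrac1{2x^2}$. Second, $\Jcost(1-\delta)\ge\Jcost(1+\delta)$. A direct computation gives this: $\frac{\delta^2}{2(1-\delta)}\ge\frac{\delta^2}{2(1+\delta)}$.

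\emph{Step 3: inverting the sublevel bound.} Lemma~\ref{lem:sublevel-interval} now gives $x/\iota_O(o)\in[a_{\epsilon_\delta},b_{\epsilon_\delta}]$. Rearranging yields
\[
\frac{x}{b_{\epsilon_\delta}}\le\iota_O(o)\le\frac{x}{a_{\epsilon_\delta}}.
\]
Finally, bound $x$ below by $1-\delta$ and above by $1+\delta$ to land in $I_\delta$.

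There is no substantive obstacle. The only point needing care is the monotonicity comparison in Step 2, which identifies $\epsilon_\delta$ as the maximum of $\Jcost$ on the window. Even that step is already established in the proof of Theorem~\ref{thm:backbone}, so citing that theorem suffices.
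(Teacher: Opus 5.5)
Your proposal is correct and matches the paper, which proves the corollary as an immediate consequence of Theorem~\ref{thm:backbone}(2). Your optional three-step retracing also reproduces that theorem's own argument: comparison with the balanced object $o_0$, then the bound $\Jcost(x)\le\epsilon_\delta$ via monotonicity and $\Jcost(1-\delta)\ge\Jcost(1+\delta)$, then inversion through Lemma~\ref{lem:sublevel-interval}.
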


\begin{proof}
Immediate from Theorem~\ref{thm:backbone}(2).
\end{proof}

\begin{remark}\label{rem:effectiveness-comment}
In this direct admissible model, restricting attention to the near-balanced class $S_\delta$ forces all meanings to lie in the fixed scale window $I_\delta$. Any extension of the framework that permits meanings outside $I_\delta$ must, in particular, enlarge the available symbol class and/or modify the reference mechanism (e.g. via product composition or sequential mediation as in Section~\ref{sec:compositionality}).
\end{remark}

\section{Compositionality}\label{sec:compositionality}

This section records two elementary composition mechanisms for reference costs:
\emph{(i)} product composition (independent coordinates) and \emph{(ii)} sequential mediation through an intermediate space.
Both are purely variational constructions: they introduce no semantic primitive beyond the cost function(s).

\subsection{Product reference and coordinatewise meaning}

\begin{definition}[Product reference]\label{def:product-ref}
Let $\RefStruct_1$ be a reference structure from a {configuration (token)} set $S_1$ to an object set $O_1$, and let $\RefStruct_2$ be a reference structure from a {configuration (token)} set $S_2$ to an object set $O_2$.
Write their costs as $c_{\RefStruct_i}$. The \emph{product reference structure} $\RefStruct_1\otimes\RefStruct_2: S_1\times S_2\to O_1\times O_2$ is defined by
\begin{equation}\label{eq:product-cost}
  c_{\RefStruct_1\otimes\RefStruct_2}\big((s_1,s_2),(o_1,o_2)\big)
  :=c_{\RefStruct_1}(s_1,o_1)+c_{\RefStruct_2}(s_2,o_2).
\end{equation}
\end{definition}

\noindent With the product cost in hand, meaning decomposes coordinatewise; the next theorem makes this precise.

\begin{theorem}[Compositionality of product meaning]\label{thm:comp}
For any reference structures $\RefStruct_1,\RefStruct_2$ and their product $\RefStruct_1\otimes\RefStruct_2$,
for all $(s_1,s_2)\in S_1\times S_2$ and $(o_1,o_2)\in O_1\times O_2$ one has
\[
\Mean_{\RefStruct_1\otimes\RefStruct_2}\big((s_1,s_2),(o_1,o_2)\big)
\iff
\Mean_{\RefStruct_1}(s_1,o_1)\ \text{and}\ \Mean_{\RefStruct_2}(s_2,o_2).
\]
Equivalently,
\[
\Mean_{\RefStruct_1\otimes\RefStruct_2}(s_1,s_2)
=\Mean_{\RefStruct_1}(s_1)\times\Mean_{\RefStruct_2}(s_2).
\]
\end{theorem}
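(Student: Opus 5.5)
The plan is to prove both directions of the pointwise equivalence directly from Definition~\ref{def:meaning} and the additive form \eqref{eq:product-cost}. The set identity then follows by unpacking the definition of the meaning set. No property of $\Jcost$ is needed, since the theorem is stated for arbitrary reference structures.

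\emph{($\Leftarrow$).} Suppose $\Mean_{\RefStruct_1}(s_1,o_1)$ and $\Mean_{\RefStruct_2}(s_2,o_2)$ hold. Take an arbitrary competitor $(o_1',o_2')\in O_1\times O_2$. By Definition~\ref{def:meaning}, $c_{\RefStruct_1}(s_1,o_1)\le c_{\RefStruct_1}(s_1,o_1')$ and $c_{\RefStruct_2}(s_2,o_2)\le c_{\RefStruct_2}(s_2,o_2')$. Adding these two inequalities and using \eqref{eq:product-cost} gives
\[
c_{\RefStruct_1\otimes\RefStruct_2}\big((s_1,s_2),(o_1,o_2)\big)\le c_{\RefStruct_1\otimes\RefStruct_2}\big((s_1,s_2),(o_1',o_2')\big),
\]
which is exactly the product meaning condition.

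\emph{($\Rightarrow$).} This is the step that needs a little care, because one cannot subtract infima, which might not be attained. Instead I would use competitors that vary one coordinate only. Assume the product meaning condition holds and let $o_1'\in O_1$ be arbitrary. Test against $(o_1',o_2)$. The term $c_{\RefStruct_2}(s_2,o_2)$ appears on both sides and is a finite real number, since costs take values in $\R_{\ge 0}$, so it cancels. This leaves $c_{\RefStruct_1}(s_1,o_1)\le c_{\RefStruct_1}(s_1,o_1')$, i.e.\ $\Mean_{\RefStruct_1}(s_1,o_1)$. The symmetric choice $(o_1,o_2')$ gives $\Mean_{\RefStruct_2}(s_2,o_2)$.

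For the set form, $(o_1,o_2)\in\Mean_{\RefStruct_1\otimes\RefStruct_2}(s_1,s_2)$ holds iff the product meaning condition holds. By the equivalence just proved, this happens iff $o_1\in\Mean_{\RefStruct_1}(s_1)$ and $o_2\in\Mean_{\RefStruct_2}(s_2)$, which gives the Cartesian product. Emptiness is handled automatically: if either factor's meaning set is empty, the argument shows the product meaning set is empty too. There is no genuine obstacle here; the only point to flag is that the cancellation in ($\Rightarrow$) relies on finiteness of the costs.
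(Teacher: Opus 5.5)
Your proposal is correct and follows the same route as the paper: you add the two coordinatewise inequalities for ($\Leftarrow$), and for ($\Rightarrow$) you test against the one-coordinate competitors $(o_1',o_2)$ and $(o_1,o_2')$ and cancel the common finite term. Your remark that the cancellation relies on the costs being real-valued is a fair point, and the paper uses it implicitly.
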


\begin{proof}
Fix $(s_1,s_2)$ and define $f_1(o_1):=c_{\RefStruct_1}(s_1,o_1)$ on $O_1$ and $f_2(o_2):=c_{\RefStruct_2}(s_2,o_2)$ on $O_2$.
By \eqref{eq:product-cost}, the product cost equals $f_1(o_1)+f_2(o_2)$.

If $\Mean_{\RefStruct_1}(s_1,o_1)$ and $\Mean_{\RefStruct_2}(s_2,o_2)$, then for all $(o_1',o_2')$ we have
$f_1(o_1)\le f_1(o_1')$ and $f_2(o_2)\le f_2(o_2')$, hence
$f_1(o_1)+f_2(o_2)\le f_1(o_1')+f_2(o_2')$, which is exactly $\Mean_{\RefStruct_1\otimes\RefStruct_2}\big((s_1,s_2),(o_1,o_2)\big)$.

Conversely, if $(o_1,o_2)$ minimizes $f_1+f_2$ on $O_1\times O_2$, then for any $o_1'\in O_1$,
\[
  f_1(o_1)+f_2(o_2)\le f_1(o_1')+f_2(o_2),
\]
so $f_1(o_1)\le f_1(o_1')$, i.e. $\Mean_{\RefStruct_1}(s_1,o_1)$.
The same argument with $o_2'$ yields $\Mean_{\RefStruct_2}(s_2,o_2)$.
\end{proof}

\begin{corollary}[Existence of product meanings under the explicit mismatch cost]\label{cor:product-meaning-exists}
Assume the explicit mismatch cost \eqref{eq:Jcost-explicit-reminder} and admissible reference on each component.
If, for $i=1,2$, the object ratio set $Y_{O_i}:=\iota_{O_i}(O_i)\subset\mathbb{R}_{>0}$ is nonempty and closed{ in the usual topology on $(0,\infty)$}, then
for every $(s_1,s_2)\in S_1\times S_2$ the product meaning set $\Mean_{\RefStruct_1\otimes \RefStruct_2}(s_1,s_2)$ is nonempty.
\end{corollary}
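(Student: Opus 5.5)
The plan is to combine the existence result for a single admissible reference structure with the factorization of product meaning. The work is almost entirely bookkeeping: check that each component satisfies the hypotheses of Theorem~\ref{thm:meaning-exists}, then transfer nonemptiness through Theorem~\ref{thm:comp}.

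Fix $(s_1,s_2)\in S_1\times S_2$. For each $i=1,2$, the structure $\RefStruct_i$ is admissible with the explicit cost \eqref{eq:Jcost-explicit-reminder}. Its object ratio set $Y_{O_i}=\iota_{O_i}(O_i)$ is nonempty and closed in $(0,\infty)$. These are exactly the hypotheses of Theorem~\ref{thm:meaning-exists}, applied to the pair of costed spaces underlying $\RefStruct_i$. That theorem therefore gives an object $o_i\in O_i$ with $\Mean_{\RefStruct_i}(s_i,o_i)$, so $\Mean_{\RefStruct_i}(s_i)\neq\emptyset$.

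Next we invoke Theorem~\ref{thm:comp}, which holds for arbitrary reference structures:
\[
\Mean_{\RefStruct_1\otimes\RefStruct_2}(s_1,s_2)=\Mean_{\RefStruct_1}(s_1)\times\Mean_{\RefStruct_2}(s_2).
\]
A Cartesian product of two nonempty sets is nonempty; explicitly, $(o_1,o_2)$ lies in it. Hence the product meaning set is nonempty.

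No step is a genuine obstacle. The only point needing care is that Theorem~\ref{thm:comp} requires no admissibility or ratio form for the product cost itself. This matters because the product cost $c_{\RefStruct_1}+c_{\RefStruct_2}$ is generally not ratio-induced from a single scale on $O_1\times O_2$. For that reason we apply existence componentwise and never to the product directly.
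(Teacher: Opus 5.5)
Your proposal is correct and follows the paper's proof: apply Theorem~\ref{thm:meaning-exists} to each admissible component to obtain $o_i\in\Mean_{\RefStruct_i}(s_i)$, then use Theorem~\ref{thm:comp} to conclude that $(o_1,o_2)\in\Mean_{\RefStruct_1\otimes\RefStruct_2}(s_1,s_2)$. You are also right that existence must be applied componentwise rather than to the product cost, which is not ratio-induced.
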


\begin{proof}
Under the stated hypotheses, Theorem~\ref{thm:meaning-exists} implies $\Mean_{\RefStruct_i}(s_i)\neq\emptyset$ for each $i$.
Pick $o_i\in\Mean_{\RefStruct_i}(s_i)$.
Then Theorem~\ref{thm:comp} yields $(o_1,o_2)\in\Mean_{\RefStruct_1\otimes\RefStruct_2}(s_1,s_2)$.
\end{proof}

\subsection{Sequential mediation}

\begin{definition}[Sequential reference]\label{def:sequential-ref}
Let $\RefStruct_1 : \Sym \to \mathcal{M}$ and $\RefStruct_2 : \mathcal{M} \to \Obj$ be reference structures.
Their \emph{sequential composition} $\RefStruct_2 \circ \RefStruct_1 : \Sym \to \Obj$ is defined by the infimal convolution
\begin{equation}\label{eq:sequential-cost}
    c_{\RefStruct_2 \circ \RefStruct_1}(s, o) = \inf_{m \in \mathcal{M}} \left[ c_{\RefStruct_1}(s, m) + c_{\RefStruct_2}(m, o) \right].
\end{equation}
A \emph{mediator} $m$ is \emph{optimal} for $(s,o)$ if it attains the infimum in \eqref{eq:sequential-cost}.
\end{definition}

\noindent We next compute the optimal mediator explicitly under the canonical mismatch cost.

\begin{theorem}[Geometric-mean mediator for the explicit mismatch cost]\label{thm:seq-mediator}
Assume the explicit mismatch functional \eqref{eq:Jcost-explicit-reminder} and admissible reference for
$\RefStruct_1: \Sym\to\mathcal{M}$ and $\RefStruct_2: \mathcal{M}\to\Obj$ with scale maps $\iota_S,\iota_M,\iota_O$.
Fix $s\in S$ and $o\in O$ and set $a:=\iota_S(s)$ and $c:=\iota_O(o)$.
Let $Y_M:=\iota_M(\mathcal{M})\subset\mathbb{R}_{>0}$.
{Assume that $Y_M$ is nonempty and closed in the usual topology on $(0,\infty)$.}
Set $b_{\mathrm{geo}}:=\sqrt{ac}$ and $U:=\{\log b: b\in Y_M\}\subset\R$.
{Then the infimum in \eqref{eq:sequential-cost} is attained by at least one mediator $m_*\in\mathcal{M}$. Moreover, a mediator $m\in\mathcal{M}$ with $b:=\iota_M(m)$ is optimal if and only if $\log b$ minimizes $|\log b-\log b_{\mathrm{geo}}|$ over $U$ (equivalently, $b$ minimizes $|\log(b/b_{\mathrm{geo}})|$ over $Y_M$).}
{In particular, if $b_{\mathrm{geo}}\in Y_M$, then the optimal mediator ratio is unique and equals $b_{\mathrm{geo}}$; in that case, choosing $m_*\in\mathcal{M}$ with $\iota_M(m_*)=b_{\mathrm{geo}}$ gives}
\[
  c_{\RefStruct_2 \circ \RefStruct_1}(s,o)=\Jcost\!\left(\frac{a}{b_{\mathrm{geo}}}\right)+\Jcost\!\left(\frac{b_{\mathrm{geo}}}{c}\right)
  =2\,\Jcost\!\left(\sqrt{\frac{a}{c}}\right).
\]
\end{theorem}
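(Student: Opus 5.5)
The plan is to pass to logarithmic coordinates, where the explicit cost becomes $\Jcost(e^t)=\cosh t-1$. Write $\alpha:=\log a$, $\gamma:=\log c$, and for a mediator with ratio $b$ put $\beta:=\log b\in U$. The objective in \eqref{eq:sequential-cost} then reads
\[
g(\beta):=\cosh(\alpha-\beta)+\cosh(\beta-\gamma)-2 .
\]
The key step is the product-to-sum identity $\cosh p+\cosh q=2\cosh\frac{p+q}{2}\cosh\frac{p-q}{2}$. Applied with $p=\alpha-\beta$ and $q=\beta-\gamma$, it gives
\[
g(\beta)=2\cosh\!\Big(\tfrac{\alpha-\gamma}{2}\Big)\cosh\!\big(\beta-\beta_{\mathrm{geo}}\big)-2,\qquad \beta_{\mathrm{geo}}:=\tfrac{\alpha+\gamma}{2}=\log b_{\mathrm{geo}} .
\]
Since $2\cosh\frac{\alpha-\gamma}{2}>0$ is a constant and $\cosh$ is strictly increasing in $|\cdot|$, $g$ is a strictly increasing function of $|\beta-\beta_{\mathrm{geo}}|$. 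Hence $m$ is optimal iff $\iota_M(m)=b$ with $\log b$ minimizing $|\log b-\log b_{\mathrm{geo}}|$ over $U$. This is the claimed characterization, and it already makes the reduction to a one-dimensional nearest-point problem exact.

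For attainment, I would use that $\log:(0,\infty)\to\R$ is a homeomorphism. So $U$ is nonempty and closed in $\R$, and the distance from $\beta_{\mathrm{geo}}$ to $U$ is attained: intersect $U$ with a large compact interval around $\beta_{\mathrm{geo}}$ containing some point of $U$, and minimize the continuous function $|\beta-\beta_{\mathrm{geo}}|$ there. Picking $\beta_*$ attaining it and $m_*\in\mathcal M$ with $\iota_M(m_*)=e^{\beta_*}$ gives an optimal mediator. One could alternatively invoke the coercivity argument of Theorem~\ref{thm:meaning-exists}, but the log-distance route is cleaner.

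If $b_{\mathrm{geo}}\in Y_M$, the minimal distance is $0$ and is achieved only at $\beta=\beta_{\mathrm{geo}}$, so the optimal ratio is unique. Substituting gives $g=2\cosh\frac{\alpha-\gamma}{2}-2=2\Jcost(\sqrt{a/c})$. This matches $\Jcost(a/b_{\mathrm{geo}})+\Jcost(b_{\mathrm{geo}}/c)$, since both ratios equal $\sqrt{a/c}$.

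The main obstacle is just the trigonometric identity: one must verify that the cross term separates multiplicatively. Everything else is elementary. The only subtlety is that the nearest point in $U$ may be non-unique (two points equidistant on either side), and the statement correctly allows this.
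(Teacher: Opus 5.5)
Your proposal is correct and follows essentially the same route as the paper: pass to log-coordinates, apply the identity $\cosh p+\cosh q=2\cosh\frac{p+q}{2}\cosh\frac{p-q}{2}$ to reduce optimality to a nearest-point problem for $\log b_{\mathrm{geo}}$ in the closed set $U$, and then read off uniqueness and the value $2\,\Jcost(\sqrt{a/c})$ when $b_{\mathrm{geo}}\in Y_M$. The one thing worth stating explicitly is that admissibility makes the objective depend on $m$ only through $\iota_M(m)$. You use this implicitly, and the paper states it at the outset.
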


\begin{proof}
Under admissibility, the objective in \eqref{eq:sequential-cost} depends on $m$ only through $b:=\iota_M(m)\in Y_M$, namely
\[
F(b):=\Jcost\!\left(\frac{a}{b}\right)+\Jcost\!\left(\frac{b}{c}\right).
\]
For the explicit penalty \eqref{eq:Jcost}, one has $\Jcost(x)=\cosh(\log x)-1$. Writing $t:=\log a$, $s:=\log c$, and $u:=\log b$, we obtain
\[
F(b)=\bigl(\cosh(t-u)-1\bigr)+\bigl(\cosh(u-s)-1\bigr)=\cosh(t-u)+\cosh(u-s)-2.
\]
Using $\cosh(\alpha)+\cosh(\beta)=2\cosh\bigl(\tfrac{\alpha+\beta}{2}\bigr)\cosh\bigl(\tfrac{\alpha-\beta}{2}\bigr)$ with $\alpha=t-u$ and $\beta=u-s$ gives
\[
F(b)=2\,\cosh\Bigl(\frac{t-s}{2}\Bigr)\,\cosh\Bigl(u-\frac{t+s}{2}\Bigr)-2
=2\,\cosh\Bigl(\frac{\log(a/c)}{2}\Bigr)\,\cosh\bigl(u-\log b_{\mathrm{geo}}\bigr)-2.
\]
Since $\cosh\bigl(\frac{\log(a/c)}{2}\bigr)>0$ is constant and $\cosh$ is even and strictly increasing on $[0,\infty)$, minimizing $F$ over $b\in Y_M$ is equivalent to minimizing $|u-\log b_{\mathrm{geo}}|$ over $u\in U=\log Y_M$.
Because $\log:(0,\infty)\to\R$ is a homeomorphism and $Y_M$ is closed and nonempty, the set $U$ is closed and nonempty in $\R$, hence the distance function $u\mapsto |u-\log b_{\mathrm{geo}}|$ attains its minimum on $U$.
This proves existence of at least one minimizer $u_*\in U$, and the stated characterization of optimal ratios.
If $b_{\mathrm{geo}}\in Y_M$ (equivalently $\log b_{\mathrm{geo}}\in U$), then the unique minimizer of $u\mapsto |u-\log b_{\mathrm{geo}}|$ on $U$ is $u=\log b_{\mathrm{geo}}$, hence the optimal mediator ratio is unique and equals $b_{\mathrm{geo}}$.
Substituting $b_{\mathrm{geo}}=\sqrt{ac}$ yields $\Jcost(a/b_{\mathrm{geo}})=\Jcost\!\bigl(\sqrt{a/c}\bigr)=\Jcost(b_{\mathrm{geo}}/c)$ and the displayed formula.
\end{proof}

\begin{corollary}[Mediation can strictly reduce mismatch]\label{cor:mediation-reduces}
For every $x>0$ one has
\[
2\,\Jcost(\sqrt{x})\le \Jcost(x),
\]
with equality if and only if $x=1$.
{Consequently, in the setting of Theorem~\ref{thm:seq-mediator}, if $b_{\mathrm{geo}}\in Y_M$ and a direct admissible reference $\RefStruct:\Sym\to\Obj$ is available (built from the same $\Jcost$ and scale maps), then}
\[
  c_{\RefStruct_2\circ\RefStruct_1}(s,o)\le c_{\RefStruct}(s,o),
\]
with equality if and only if $\iota_S(s)=\iota_O(o)$.
\end{corollary}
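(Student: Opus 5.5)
The proof splits into a scalar inequality and its transfer to the mediation setting via Theorem~\ref{thm:seq-mediator}.

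For the inequality I would pass to logarithmic coordinates, as in the proof of Theorem~\ref{thm:seq-mediator}, using $\Jcost(x)=\cosh(\log x)-1$. With $t:=\log x$ the claim becomes
\[
2\bigl(\cosh(t/2)-1\bigr)\le \cosh t-1 .
\]
The double-angle identity $\cosh t=2\cosh^2(t/2)-1$ rewrites the right-hand side as $2\cosh^2(t/2)-2$. Setting $w:=\cosh(t/2)\ge 1$, the inequality reduces to
\[
2w-2\le 2w^2-2,\quad\text{i.e.}\quad w(w-1)\ge 0 .
\]
This holds because $w\ge 1$. Equality forces $w=1$, which means $t=0$, i.e.\ $x=1$.

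As a cross-check without hyperbolic functions, one can set $y:=\sqrt{x}$ and use $\Jcost(y^2)=\tfrac{(y^2-1)^2}{2y^2}$ and $2\Jcost(y)=\tfrac{(y-1)^2}{y}$. The difference is then $\tfrac{(y-1)^2\bigl((y+1)^2-2y\bigr)}{2y^2}=\tfrac{(y-1)^2(y^2+1)}{2y^2}\ge 0$, with equality iff $y=1$.

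For the consequence, set $a=\iota_S(s)$ and $c=\iota_O(o)$. When $b_{\mathrm{geo}}\in Y_M$, Theorem~\ref{thm:seq-mediator} gives $c_{\RefStruct_2\circ\RefStruct_1}(s,o)=2\Jcost(\sqrt{a/c})$. Admissibility of the direct reference gives $c_{\RefStruct}(s,o)=\Jcost(a/c)$. Applying the scalar inequality with $x=a/c$ yields the claimed bound, with equality iff $a/c=1$, i.e.\ $\iota_S(s)=\iota_O(o)$.

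I expect no real obstacle. The only points needing care are the algebraic reduction, which is settled by the double-angle identity, and the correct direction of the equality case in the mediation statement.
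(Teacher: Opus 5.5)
Your proof is correct and follows the paper's approach: the paper proves the scalar inequality by the direct computation $\Jcost(t^2)-2\Jcost(t)=\tfrac12\bigl((t-1)^2+(t^{-1}-1)^2\bigr)$ with $t=\sqrt{x}$, which is your cross-check, and your factorization $2w(w-1)$ is the same expression in log coordinates. The paper then transfers the inequality to mediation through Theorem~\ref{thm:seq-mediator} exactly as you do; incidentally, your identity $\Jcost(x)-2\Jcost(\sqrt{x})=2\Jcost(\sqrt{x})\bigl(1+\Jcost(\sqrt{x})\bigr)$ is just the d'Alembert identity \eqref{eq:dalembert} with both arguments equal to $\sqrt{x}$.
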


\begin{proof}
Let $t:=\sqrt{x}>0$. Using \eqref{eq:Jcost}, a direct calculation gives
\[
\Jcost(t^2)-2\,\Jcost(t)=\frac12\Bigl((t-1)^2+\bigl(t^{-1}-1\bigr)^2\Bigr)\ge 0,
\]
with equality if and only if $t=1$, i.e. $x=1$.
{If $b_{\mathrm{geo}}\in Y_M$, Theorem~\ref{thm:seq-mediator} gives $c_{\RefStruct_2\circ\RefStruct_1}(s,o)=2\,\Jcost(\sqrt{x})$ with $x=\iota_S(s)/\iota_O(o)$; comparing with $c_{\RefStruct}(s,o)=\Jcost(x)$ yields the stated inequality.}
\end{proof}

\section{Extensions: multi-dimensional scales and robustness}\label{sec:extensions}

The core framework above uses a single positive scale coordinate $\iota(\cdot)\in\R_{>0}$.  In some applications one may want a
finite list of independent scale coordinates (for instance, a {configuration} might carry multiple features, each measured in the same
``cost currency'' through $\Jcost$).  This section records a minimal extension of the model to $d$ coordinates and a simple robustness
lemma for finite dictionaries.

\subsection{Multi-dimensional costed spaces}

\begin{definition}[Multi-dimensional costed space]\label{def:md-costed-space}
Let $d\in\N$.  A \emph{$d$-dimensional costed space} is a triple $(C,J_C,\iota_C)$ where
\begin{itemize}
\item $C$ is a set,
\item $\iota_C:C\to(\R_{>0})^d$ is a scale map, and
\item $J_C:C\to\R_{\ge 0}$ is the induced (separable) cost
\[
  J_C(c):=\sum_{i=1}^d \Jcost\big(\iota_C(c)_i\big),\qquad c\in C.
\]
\end{itemize}
\end{definition}

\noindent We extend admissible reference by taking a separable, coordinatewise ratio penalty.

\begin{definition}[Multi-dimensional admissible reference]\label{def:md-admissible-ref}
Let $(S,J_S,\iota_S)$ and $(O,J_O,\iota_O)$ be $d$-dimensional costed spaces.  A reference structure $\RefStruct$ from $S$ to $O$ is
\emph{multi-dimensionally admissible} if its reference cost is the coordinatewise ratio cost
\begin{equation}\label{eq:md-admissible}
  c_{\RefStruct}(s,o)
  =\sum_{i=1}^d \Jcost\!\left(\frac{\iota_S(s)_i}{\iota_O(o)_i}\right),\qquad (s,o)\in S\times O.
\end{equation}
\end{definition}

\noindent The separable form immediately implies that meanings factor coordinatewise.

\begin{corollary}[Coordinatewise meaning for product models]\label{thm:md-product-meaning}
Assume $S=\prod_{i=1}^d S_i$ and $O=\prod_{i=1}^d O_i$ and that the scale maps factor coordinatewise:
$\iota_S(s)_i=\iota_{S_i}(s_i)$ and $\iota_O(o)_i=\iota_{O_i}(o_i)$.  If $\RefStruct$ is multi-dimensionally admissible, then
\[
(o_1,\dots,o_d)\in\Mean_{\RefStruct}(s_1,\dots,s_d)
\quad\Longleftrightarrow\quad
\forall i,\; o_i\in\Mean_{\RefStruct_i}(s_i),
\]
where $\RefStruct_i$ denotes the induced one-dimensional admissible reference on $(S_i,O_i)$.
\end{corollary}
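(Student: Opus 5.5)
The plan is to reduce Corollary~\ref{thm:md-product-meaning} to Theorem~\ref{thm:comp}, applied inductively over the $d$ coordinates. Equivalently, one can repeat its short argument directly for a sum of $d$ functions of independent variables.

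First I fix $s=(s_1,\dots,s_d)$. Because the scale maps factor coordinatewise, admissibility \eqref{eq:md-admissible} gives
\[
c_{\RefStruct}(s,o)=\sum_{i=1}^d f_i(o_i),\qquad f_i(o_i):=\Jcost\!\left(\frac{\iota_{S_i}(s_i)}{\iota_{O_i}(o_i)}\right)=c_{\RefStruct_i}(s_i,o_i),
\]
where $\RefStruct_i$ is the one-dimensional admissible reference on $(S_i,O_i)$. In the notation of Definition~\ref{def:product-ref}, this says exactly that $c_{\RefStruct}$ is the cost of the iterated product $\RefStruct_1\otimes\cdots\otimes\RefStruct_d$. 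Here I use the associativity of addition and the identification $\prod_i O_i\cong(\prod_{i<d}O_i)\times O_d$.

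Next I argue by induction on $d$. The case $d=1$ is trivial. For the inductive step I write $\RefStruct=\RefStruct'\otimes\RefStruct_d$, where $\RefStruct'$ carries the first $d-1$ coordinates. Theorem~\ref{thm:comp} then gives $\Mean_{\RefStruct}(s)=\Mean_{\RefStruct'}(s_1,\dots,s_{d-1})\times\Mean_{\RefStruct_d}(s_d)$, and the induction hypothesis finishes the step.

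Alternatively, I can argue both directions directly. If each $o_i$ minimizes $f_i$, summing the inequalities $f_i(o_i)\le f_i(o_i')$ shows that $o$ minimizes the sum. Conversely, if $o$ minimizes the sum, I compare it with the tuple obtained by replacing only its $i$-th entry with an arbitrary $o_i'$. All other terms cancel, which leaves $f_i(o_i)\le f_i(o_i')$. This cancellation is valid because every term is a finite real number in $[0,\infty)$.

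There is no real obstacle here. The only point needing care is the bookkeeping: I must check that the coordinatewise-factorization hypothesis is exactly what makes $c_{\RefStruct}$ a separable sum over independent variables $o_i\in O_i$. Without $O=\prod_i O_i$, the single-coordinate replacement in the converse direction would not stay inside $O$.
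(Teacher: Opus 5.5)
Your proposal is correct and follows the paper's proof: the paper also observes that, under coordinatewise factorization, the admissible cost \eqref{eq:md-admissible} is a separable sum of nonnegative terms, each depending only on $(s_i,o_i)$, and then invokes the argument of Theorem~\ref{thm:comp}. Your induction via Theorem~\ref{thm:comp} and your direct single-coordinate-replacement argument are two explicit ways of carrying out that same step.
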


\begin{proof}
By \eqref{eq:md-admissible} the cost is a separable sum of $d$ nonnegative terms, each depending only on $(s_i,o_i)$.  Thus minimizing over
$O=\prod_i O_i$ is equivalent to minimizing each summand over its coordinate; this is the same argument as in Theorem~\ref{thm:comp}.
\end{proof}

\subsection{Log-space geometry for the explicit mismatch cost}

In this subsection we specialize to the explicit mismatch functional
\begin{equation}\label{eq:Jcost-explicit-reminder-6}
\Jcost(x)=\tfrac12(x+x^{-1})-1\qquad (x>0),
\end{equation}
already used in Sections~\ref{sec:Jcost}--\ref{sec:compositionality}.

\begin{lemma}[Log-coordinate form]\label{lem:log-cosh}
For all $t\in\R$ one has $\Jcost(e^t)=\cosh(t)-1$.
\end{lemma}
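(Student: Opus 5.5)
This is a direct substitution into the explicit formula \eqref{eq:Jcost-explicit-reminder-6}; nothing beyond the definition of $\cosh$ is needed. Fix $t\in\R$ and set $x:=e^t$. Since $x>0$, the functional is defined at $x$, and $x^{-1}=e^{-t}$. Then
\[
\Jcost(e^t)=\tfrac12\bigl(e^t+e^{-t}\bigr)-1 .
\]
The bracketed half-sum is exactly $\cosh(t)$ by the definition $\cosh(t)=\tfrac12(e^t+e^{-t})$. Hence $\Jcost(e^t)=\cosh(t)-1$ for every $t\in\R$.

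Equivalently, by the same computation, $\Jcost(x)=\cosh(\log x)-1$ for all $x>0$, because $\log:(0,\infty)\to\R$ is a bijection with inverse $\exp$. This is the form already used in the proof of Theorem~\ref{thm:seq-mediator}, and it is the case $a=1$ of Proposition~\ref{prop:Jcost-characterization}.

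There is no real obstacle. The only point to check is that $e^t$ ranges over all of $(0,\infty)$, so the identity holds on the full domain of $\Jcost$ and not only at particular values of $t$.
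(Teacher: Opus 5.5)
Your proof is correct and is exactly the paper's argument: substitute $x=e^t$ into $\Jcost(x)=\tfrac12(x+x^{-1})-1$ and recognize $\tfrac12(e^t+e^{-t})$ as $\cosh(t)$. The surjectivity of $\exp$ onto $(0,\infty)$ that you mention is needed only for the reformulation $\Jcost(x)=\cosh(\log x)-1$; the lemma as stated needs just $e^t>0$.
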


\begin{proof}
Immediate from \eqref{eq:Jcost-explicit-reminder-6}: $\Jcost(e^t)=\tfrac12(e^t+e^{-t})-1=\cosh(t)-1$.
\end{proof}

\begin{proposition}[Quadratic regime with explicit remainder]\label{prop:quadratic-bound}
For all $t\in\R$,
\[
0\le \Jcost(e^t)-\frac{t^2}{2}\le \frac{t^4}{24}\,\cosh(|t|).
\]
In particular, for $|t|\le 1$,
\[
\frac{t^2}{2}\le \Jcost(e^t)\le \frac{t^2}{2}+\frac{\cosh(1)}{24}\,t^4.
\]
\end{proposition}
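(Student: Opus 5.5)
By Lemma~\ref{lem:log-cosh}, $\Jcost(e^t)-\tfrac{t^2}{2}=\cosh t-1-\tfrac{t^2}{2}$. The whole statement therefore reduces to a bound on the Taylor remainder of $\cosh$ at $0$, and I will treat it as a one-variable calculus estimate.

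\emph{Step 1: power series.} I will write
\[
\cosh t-1-\frac{t^2}{2}=\sum_{k\ge 2}\frac{t^{2k}}{(2k)!}.
\]
Every term here is nonnegative, which gives the lower bound $0\le \Jcost(e^t)-t^2/2$ at once.

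\emph{Step 2: Lagrange remainder.} For the upper bound I will apply Taylor's theorem to $\cosh$ at $0$ to fourth order. Since $\cosh'''=\sinh$ vanishes at $0$, the cubic term drops out. The Lagrange form then gives
\[
\cosh t-1-\frac{t^2}{2}=\frac{\cosh(\xi)}{24}\,t^4
\]
for some $\xi$ between $0$ and $t$. Because $\cosh$ is even and increasing on $[0,\infty)$, we have $\cosh\xi\le\cosh|t|$, which yields the stated bound.

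As a cross-check I can also bound the series directly. Factoring out $t^4/24$ leaves
\[
\sum_{k\ge 2}\frac{24\,t^{2k-4}}{(2k)!}.
\]
Using $(2k)!\ge 24\,(2k-4)!$, this is at most $\sum_{j\ge 0}t^{2j}/(2j)!=\cosh t$.

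\emph{Step 3: the case $|t|\le 1$.} Here I just use $\cosh|t|\le\cosh 1$ in the bound from Step 2.

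The only step needing care is making sure the remainder is taken at fourth order with the right derivative. Because the third derivative $\sinh$ vanishes at $0$, a third-order expansion would give the wrong form, so the fourth-order Lagrange remainder (or the coefficient comparison $(2k)!\ge 24(2k-4)!$) is the key point. Otherwise the argument is routine.
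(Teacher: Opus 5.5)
Your proof is correct and follows essentially the same route as the paper: you reduce to $\cosh t-1-\tfrac{t^2}{2}$ via Lemma~\ref{lem:log-cosh}, apply the fourth-order Lagrange remainder $\tfrac{t^4}{24}\cosh\xi$, and bound $\cosh\xi\le\cosh|t|$. The only differences are cosmetic: you get the lower bound from the nonnegative power-series tail, whereas the paper reads it off from the sign of the same remainder, and your series cross-check via $(2k)!\ge 24\,(2k-4)!$ is a valid alternative that avoids derivatives.
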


\begin{proof}
By Lemma~\ref{lem:log-cosh} it suffices to estimate $\cosh(t)-1-\tfrac12 t^2$.  Taylor's theorem at $0$ with remainder gives
\[
\cosh(t)=1+\frac{t^2}{2}+\frac{t^4}{24}\cosh(\xi)
\]
for some $\xi$ between $0$ and $t$.  Since $\cosh$ is even and increasing on $\R_{\ge 0}$, one has $\cosh(\xi)\le \cosh(|t|)$, yielding the upper bound.
Nonnegativity follows because the remainder term is $\frac{t^4}{24}\cosh(\xi)\ge 0$.
\end{proof}

\begin{corollary}[Local Euclidean geometry in log-ratio]\label{cor:local-euclid}
In the one-dimensional admissible-reference setting, For the explicit mismatch cost \eqref{eq:Jcost-explicit-reminder-6}, set $x:=\iota_S(s)$ and $y:=\iota_O(o)$.  If $|\log(x/y)|\le 1$, then
\[
\frac12\big(\log(x/y)\big)^2\le c_{\RefStruct}(s,o)\le \frac12\big(\log(x/y)\big)^2+\frac{\cosh(1)}{24}\big(\log(x/y)\big)^4.
\]
Thus, in the small-mismatch regime, the reference cost is comparable to the squared log-ratio distance (up to a quartic correction).
\end{corollary}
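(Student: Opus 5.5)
The plan is to reduce the two-sided estimate to the scalar bound of Proposition~\ref{prop:quadratic-bound}, applied with the log-ratio as the variable. First, I would use admissibility (Definition~\ref{def:admissible-ref-struct}) to write the reference cost as a function of the ratio alone:
\[
c_{\RefStruct}(s,o)=\Jcost\!\bigl(\iota_S(s)/\iota_O(o)\bigr)=\Jcost(x/y).
\]
Since $x,y>0$, the ratio $x/y$ is a positive real. I would therefore set $t:=\log(x/y)\in\R$, so that $x/y=e^t$ and $c_{\RefStruct}(s,o)=\Jcost(e^t)$.

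Second, Lemma~\ref{lem:log-cosh} identifies this quantity as $\cosh(t)-1$. The hypothesis $|\log(x/y)|\le 1$ is exactly $|t|\le 1$, so the second display of Proposition~\ref{prop:quadratic-bound} applies directly. It gives
\[
\tfrac12 t^2\le \Jcost(e^t)\le \tfrac12 t^2+\tfrac{\cosh(1)}{24}\,t^4 .
\]
Substituting $t=\log(x/y)$ back in yields the stated inequality.

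The final interpretive sentence (comparability with the squared log-ratio distance up to a quartic correction) is then just a reading of the two-sided bound. For $|t|\le1$ one also has $t^4\le t^2$, so the cost lies between $\tfrac12 t^2$ and $\bigl(\tfrac12+\tfrac{\cosh 1}{24}\bigr)t^2$; this makes the comparability explicit if desired.

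I expect no step to be a genuine obstacle. The only point needing care is that the bound is stated in terms of $\log(x/y)$ rather than $x/y$. That is handled entirely by the substitution $t=\log(x/y)$, which is legitimate because $\log$ is a bijection from $(0,\infty)$ onto $\R$.
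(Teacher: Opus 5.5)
Your proposal is correct and follows the paper's proof essentially verbatim: you rewrite the cost as $\Jcost(x/y)$ by admissibility, set $t=\log(x/y)$ so that $|t|\le 1$, and invoke the second display of Proposition~\ref{prop:quadratic-bound}. Your extra observation that $t^4\le t^2$ for $|t|\le 1$ places the cost between $\tfrac12 t^2$ and $\bigl(\tfrac12+\tfrac{\cosh 1}{24}\bigr)t^2$, which correctly makes the informal comparability remark explicit.
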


\begin{proof}
For admissible reference, $c_{\RefStruct}(s,o)=\Jcost(x/y)$ with $x:=\iota_S(s)$ and $y:=\iota_O(o)$.
Write $t:=\log(x/y)$. Then $x/y=e^{t}$ and $|t|\le 1$ by hypothesis.
Apply Proposition~\ref{prop:quadratic-bound} to obtain
\(\tfrac12 t^2\le \Jcost(e^{t})\le \tfrac12 t^2+\tfrac{\cosh(1)}{24}t^4\), and substitute $t=\log(x/y)$.
\end{proof}

\subsection{Margin stability for finite dictionaries}

\begin{definition}[Decision margin]\label{def:margin}
Fix a {configuration} $s\in S$ and a finite object dictionary $O=\{o_1,\dots,o_N\}$.  Write $C_k:=c_{\RefStruct}(s,o_k)$ and let
$M:=\min_{1\le k\le N} C_k$.  The \emph{decision margin} at $s$ is
\[
\Delta(s):=\min\{C_k-M:\ 1\le k\le N,\ C_k>M\}\in[0,\infty],
\]
with the convention $\Delta(s)=\infty$ if all $C_k$ are equal.
\end{definition}

\noindent The margin parameter controls how stable the argmin is under perturbations of the cost values.

\begin{proposition}[Robustness under bounded perturbations]\label{prop:margin-stability}
In the setting of Definition~\ref{def:margin}, suppose the costs $C_k$ are perturbed to numbers $\widetilde C_k$ satisfying
\[
\max_{1\le k\le N}|\widetilde C_k-C_k|\le \eta.
\]
If $\Delta(s)>2\eta$, then no new minimizers appear under the perturbation:
\[
\{k: \widetilde C_k=\min_j \widetilde C_j\}\subseteq\{k: C_k=\min_j C_j\}.
\]
In particular, if the original minimizer set is a singleton, then the minimizing index is unchanged.
\end{proposition}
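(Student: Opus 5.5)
The plan is a direct contrapositive argument: I will show that any index $k$ outside the original minimizer set $K:=\{k: C_k=M\}$ cannot minimize the perturbed costs. First dispose of the degenerate case: if all $C_k$ are equal, then $K=\{1,\dots,N\}$ and the inclusion is trivial (this also covers $\Delta(s)=\infty$). Otherwise $K$ is a proper nonempty subset and $\Delta(s)$ is a finite positive number with $C_k\ge M+\Delta(s)$ for every $k\notin K$, directly from Definition~\ref{def:margin}.

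The key step is a two-sided comparison. Fix $k\notin K$ and any $j\in K$. Using $|\widetilde C_i-C_i|\le\eta$ for all $i$,
\[
\widetilde C_k\ \ge\ C_k-\eta\ \ge\ M+\Delta(s)-\eta\ >\ M+\eta\ =\ C_j+\eta\ \ge\ \widetilde C_j,
\]
where the strict inequality is exactly the hypothesis $\Delta(s)>2\eta$. Hence $\widetilde C_k>\widetilde C_j\ge\min_i\widetilde C_i$, so $k$ is not a perturbed minimizer. Taking the contrapositive over all $k\notin K$ gives $\{k:\widetilde C_k=\min_i\widetilde C_i\}\subseteq K$.

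For the final assertion, note that the perturbed minimizer set is nonempty because the dictionary is finite, so if $K=\{k_0\}$ is a singleton the inclusion forces the perturbed minimizer set to equal $\{k_0\}$; the minimizing index is unchanged. There is no real obstacle here; the only point needing care is the bookkeeping of the convention $\Delta(s)=\infty$ and making sure the strict inequality uses $\Delta(s)>2\eta$ rather than $\ge$, since equality could allow a tie to be created.
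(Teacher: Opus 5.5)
Your proposal is correct and follows essentially the same route as the paper: you bound the original minimizers' perturbed costs above by $M+\eta$ and every other index's perturbed cost below by $M+\Delta(s)-\eta$, then use $\Delta(s)>2\eta$ to separate them. Your explicit treatment of the degenerate case $\Delta(s)=\infty$ and your use of nonemptiness of the finite perturbed argmin for the singleton clause are minor bookkeeping differences from the paper's proof.
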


\begin{proof}
Let $I:=\{k: C_k=M\}$ be the (nonempty) set of original minimizers.  For $k\in I$ one has $\widetilde C_k\le M+\eta$.
If $k\notin I$, then $C_k\ge M+\Delta(s)$ by definition of $\Delta(s)$, hence $\widetilde C_k\ge M+\Delta(s)-\eta$.
If $\Delta(s)>2\eta$ then $M+\Delta(s)-\eta> M+\eta$, so every perturbed minimizer must lie in $I$, proving the inclusion. If $I$ is a singleton, say $I=\{k_0\}$, then the same inequality shows $\widetilde C_{k_0}<\widetilde C_k$ for all $k\neq k_0$, so the minimizer index is unchanged.
\end{proof}

\subsection{Existence (and optional uniqueness) in $d$ dimensions}

\begin{theorem}[Existence of meanings for multi-dimensional admissible reference]\label{thm:md-meaning-exists}
Let $d\in\N$ and let $(S,J_S,\iota_S)$ and $(O,J_O,\iota_O)$ be $d$-dimensional costed spaces.
Assume $\RefStruct$ is multi-dimensionally admissible in the sense of Definition~\ref{def:md-admissible-ref}. Assume also that $\Jcost$ is given by \eqref{eq:Jcost}.
Let $Y:=\iota_O(O)\subset (\R_{>0})^d$ be nonempty and closed (in the usual topology on $(0,\infty)^d$).
Then for every $s\in S$ the meaning set $\Mean_{\RefStruct}(s)$ is nonempty.
Moreover, if $x:=\iota_S(s)$ lies in $Y$, then any $o\in O$ with $\iota_O(o)=x$ is a meaning and satisfies $c_{\RefStruct}(s,o)=0$.
\end{theorem}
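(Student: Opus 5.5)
The proof will follow that of Theorem~\ref{thm:meaning-exists} coordinatewise. Fix $s\in S$ and write $x:=\iota_S(s)\in(\R_{>0})^d$. By Definition~\ref{def:md-admissible-ref}, the reference cost depends on $o$ only through $y:=\iota_O(o)\in Y$. We therefore study
\[
F(y):=\sum_{i=1}^d \Jcost\!\left(\frac{x_i}{y_i}\right),\qquad y\in Y .
\]
Each summand is continuous on $(0,\infty)^d$, so $F$ is continuous and nonnegative.

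\textbf{Compactness.} Let $m:=\inf_Y F$, which is finite because $Y\neq\emptyset$, and fix any $M>m$. If $F(y)\le M$, then every summand satisfies $\Jcost(x_i/y_i)\le M$, since all summands are nonnegative. By Lemma~\ref{lem:Jcost-coercive} (explicitly, Lemma~\ref{lem:sublevel-interval}), this forces $x_i/y_i\in[a_M,b_M]$. Hence
\[
y_i\in\Bigl[\frac{x_i}{b_M},\ \frac{x_i}{a_M}\Bigr]\qquad\text{for every } i .
\]
So the sublevel set $\{y\in Y: F(y)\le M\}$ lies in the box $K:=\prod_i[x_i/b_M,\,x_i/a_M]$. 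This box is compact and contained in $(0,\infty)^d$.

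\textbf{Attainment.} Choose a minimizing sequence $y^{(n)}\in Y$ with $F(y^{(n)})\downarrow m$; eventually every term lies in $K$. By Bolzano--Weierstrass a subsequence converges to some $y_*\in K\subset(0,\infty)^d$. Since $Y$ is closed in $(0,\infty)^d$, we have $y_*\in Y$. Continuity then gives $F(y_*)=m$.

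\textbf{Meaning and zero-cost case.} Pick $o\in O$ with $\iota_O(o)=y_*$. Then $c_\RefStruct(s,o)=F(y_*)\le F(\iota_O(o'))=c_\RefStruct(s,o')$ for every $o'\in O$, so $o\in\Mean_\RefStruct(s)$. For the second claim, suppose $x\in Y$ and $\iota_O(o)=x$. Then every term is $\Jcost(1)=0$, so $c_\RefStruct(s,o)=0\le c_\RefStruct(s,o')$ for all $o'$, and $o$ is a meaning.

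The only point requiring care is the compactness step. Closedness is only relative to $(0,\infty)^d$, so the limit $y_*$ must not escape to a boundary face where some $y_i\to0$ or $y_i\to\infty$. The box bound from coercivity in each coordinate rules this out, and it works precisely because nonnegativity of the summands lets a bound on the sum control each term separately.
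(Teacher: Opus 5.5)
Your proof is correct and follows essentially the same route as the paper: nonnegativity of the summands transfers the bound on $F$ to each coordinate, coercivity of $\Jcost$ confines the sublevel set to a compact box inside $(0,\infty)^d$, and closedness of $Y$ in $(0,\infty)^d$ then gives attainment, with the zero-cost case handled identically. The only difference is cosmetic: you extract the minimizer from a minimizing sequence where the paper invokes compactness of the sublevel set directly, and your explicit remark that the box lies inside $(0,\infty)^d$ makes clear why closedness relative to $(0,\infty)^d$ is enough.
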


\begin{proof}
Fix $s\in S$ and write $x:=\iota_S(s)\in (\R_{>0})^d$.  Consider the continuous objective on $Y$,
\[
F_x(y):=\sum_{i=1}^d \Jcost\!\left(\frac{x_i}{y_i}\right),\qquad y=(y_1,\dots,y_d)\in Y.
\]
By Lemma~\ref{lem:Jcost-coercive}, for each $M\ge 0$ the one-dimensional sublevel set $K_M:=\{z>0:\ \Jcost(z)\le M\}$ is compact.
Hence there exist $0<a_M\le 1\le b_M<\infty$ such that $K_M\subset[a_M,b_M]$.
Since each summand is nonnegative, if $F_x(y)\le M$ then each term satisfies $\Jcost(x_i/y_i)\le M$, so $x_i/y_i\in K_M\subset[a_M,b_M]$, i.e.
\[
  \frac{x_i}{b_M}\le y_i\le \frac{x_i}{a_M}\qquad (i=1,\dots,d).
\]
Therefore the sublevel set $\{y\in Y: F_x(y)\le M\}$ is closed and contained in the bounded box $\prod_i [x_i/b_M,\ x_i/a_M]$,
so it is compact (Heine--Borel).  Thus $F_x$ attains its minimum on $Y$ at some $y_*\in Y$.
Choose $o\in O$ with $\iota_O(o)=y_*$; then $o\in\Mean_{\RefStruct}(s)$ by \eqref{eq:md-admissible}.

If $x\in Y$, then $F_x(x)=\sum_i \Jcost(1)=0$.  Since each term is nonnegative, $0$ is the global minimum, so any $o$ with $\iota_O(o)=x$ is a meaning.
\end{proof}

\begin{definition}[Log-image and log-convexity]\label{def:log-dictionary}
For $Y\subset(\R_{>0})^d$ define
\[
\log Y:=\{(\log y_1,\dots,\log y_d):\ y\in Y\}\subset\R^d.
\]
We call $Y$ \emph{log-convex} if $\log Y$ is convex.
\end{definition}

\noindent When the log-image of the dictionary is convex, strict convexity yields uniqueness and continuity of the optimizer.

\begin{theorem}[Uniqueness and continuity for log-convex dictionaries]\label{thm:md-unique-continuity}
Assume the explicit mismatch cost \eqref{eq:Jcost-explicit-reminder-6} and the hypotheses of Theorem~\ref{thm:md-meaning-exists}.
If $U:=\log Y\subset\R^d$ is closed and convex, then the minimizer $y_*(x)\in Y$ of $F_x$ is unique.
Equivalently, the meaning set $\Mean_{\RefStruct}(s)$ equals the fiber $\{o\in O:\ \iota_O(o)=y_*(\iota_S(s))\}$.
Moreover, the optimizer is continuous in log-coordinates: the map $t\mapsto u_*(t)$ is continuous, where $t:=\log x$ and $u_*(t):=\log y_*(e^t)\in U$.
\end{theorem}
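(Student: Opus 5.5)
Pass to log-coordinates throughout. Write $t=\log x\in\R^d$ and $u=\log y\in U$. By Lemma~\ref{lem:log-cosh} the objective becomes
\[
G(t,u):=F_{e^t}(e^u)=\sum_{i=1}^d\bigl(\cosh(t_i-u_i)-1\bigr).
\]
Since $\log$ is a coordinatewise homeomorphism $(0,\infty)^d\to\R^d$, minimizing $F_x$ over $Y$ is the same as minimizing $G(t,\cdot)$ over $U$. The minimizers correspond bijectively via $y=e^u$.

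\textbf{Existence.} Existence of a minimizer is already supplied by Theorem~\ref{thm:md-meaning-exists}. Alternatively, $G(t,\cdot)$ is continuous and coercive on $\R^d$, since $\cosh(r)\to\infty$ as $|r|\to\infty$, and $U$ is closed and nonempty.

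\textbf{Uniqueness.} For fixed $t$, the Hessian of $u\mapsto G(t,u)$ is $\mathrm{diag}(\cosh(t_i-u_i))$, which is positive definite. So $G(t,\cdot)$ is strictly convex on $\R^d$. A strictly convex function on a convex set has at most one minimizer: if $u\neq u'$ were both minimizers with value $m$, then the midpoint $(u+u')/2\in U$ would give a value strictly below $m$, a contradiction. Hence $u_*(t)$ is unique, and so is $y_*(x)=e^{u_*(\log x)}$.

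\textbf{Fiber description.} Under admissibility \eqref{eq:md-admissible}, $c_\RefStruct(s,o)=F_x(\iota_O(o))$ with $x=\iota_S(s)$. So $o$ is a meaning exactly when $\iota_O(o)$ minimizes $F_x$ over $Y$, that is, when $\iota_O(o)=y_*(x)$.

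\textbf{Continuity.} This is the only step requiring care. Fix $t_n\to t$ and set $u_n:=u_*(t_n)$. The plan has three parts.

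First, show that $(u_n)$ is bounded. Fix any $\bar u\in U$. Then $G(t_n,u_n)\le G(t_n,\bar u)$, and the right side is bounded in $n$ by continuity of $G$. So $M:=\sup_n G(t_n,\bar u)<\infty$. As in the proof of Theorem~\ref{thm:md-meaning-exists}, the bound $G(t_n,u_n)\le M$ gives $|t_{n,i}-u_{n,i}|\le \operatorname{arccosh}(M+1)$ for each $i$. Since $(t_n)$ converges, $(u_n)$ is bounded.

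Second, identify the limit of any convergent subsequence. Suppose $u_{n_k}\to\hat u$; then $\hat u\in U$ because $U$ is closed. For any $v\in U$ we have $G(t_{n_k},u_{n_k})\le G(t_{n_k},v)$. Passing to the limit using joint continuity of $G$ gives $G(t,\hat u)\le G(t,v)$. So $\hat u$ is a minimizer, and by uniqueness $\hat u=u_*(t)$.

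Third, conclude. A bounded sequence all of whose convergent subsequences share the same limit converges to that limit. Hence $u_*(t_n)\to u_*(t)$.

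This is the standard Berge-type argument. The uniform boundedness in the first part is the one place where coercivity must be used uniformly in $n$, and that is the step to check carefully.
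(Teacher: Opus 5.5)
Your proposal is correct and follows essentially the same route as the paper. Uniqueness comes from strict convexity of $G_t(u)=\sum_i(\cosh(t_i-u_i)-1)$ on the convex set $U$, and continuity from the same uniform bound $|t_{n,i}-u_{n,i}|\le\operatorname{arcosh}(C+1)$ obtained from a fixed comparison point, followed by closedness of $U$, joint continuity, and uniqueness of subsequential limits. Your alternative existence argument, using coercivity of $G_t$ on the closed set $U$, is a small improvement: it makes $u_*(t)$ well defined for every $t\in\R^d$, not only for $t\in\log\iota_S(S)$.
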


\begin{proof}
Let $t:=\log x\in\R^d$ and write $u:=\log y\in U$.  By Lemma~\ref{lem:log-cosh},
\[
F_x(y)=\sum_{i=1}^d (\cosh(t_i-u_i)-1)=:G_t(u).
\]
For each $i$, the map $u_i\mapsto \cosh(t_i-u_i)-1$ is strictly convex, hence $G_t$ is strictly convex on $\R^d$.
Restricting to the convex set $U$ preserves strict convexity, so $G_t$ has at most one minimizer on $U$; existence follows from Theorem~\ref{thm:md-meaning-exists}.
Thus the optimizer $u_*(t)$ is unique, and so is $y_*(x)=e^{u_*(\log x)}$.

For continuity, let $t_n\to t$ and set $u_n:=u_*(t_n)\in U$.  Fix $u_0\in U$.
Since $u_n$ minimizes $G_{t_n}$ on $U$, one has $G_{t_n}(u_n)\le G_{t_n}(u_0)$.
The right-hand side is bounded because $(t,u)\mapsto G_t(u)$ is continuous and $t_n\to t$; let $C:=\sup_n G_{t_n}(u_0)<\infty$, so $G_{t_n}(u_n)\le C$ for all $n$. For each coordinate $i$ we then have $\cosh(t_{n,i}-u_{n,i})-1\le C$, hence $|t_{n,i}-u_{n,i}|\le \operatorname{arcosh}(C+1)$. Since $(t_n)$ is bounded, this yields a uniform bound on $(u_n)$ in $\R^d$.
Passing to a convergent subsequence (still denoted $u_n$) with limit $\bar u\in U$ (closedness), continuity gives
$G_t(\bar u)=\lim_n G_{t_n}(u_n)\le \lim_n G_{t_n}(u)=G_t(u)$ for all $u\in U$.
Hence $\bar u$ minimizes $G_t$ on $U$, and by uniqueness $\bar u=u_*(t)$.
Therefore every subsequence has the same limit, so $u_n\to u_*(t)$ and continuity holds.
\end{proof}

\section{Worked examples}\label{sec:examples}

This section gives explicit computations in simple settings. The purpose is not to add new axioms, but to make the meaning rule (Definition~\ref{def:meaning}) concrete. Under admissible reference, meanings are precisely the minimizers of
\[
o\mapsto \Jcost\!\Big(\frac{\iota_S(s)}{\iota_O(o)}\Big)
\]
over $O$, and in the examples below the minimizer can be computed explicitly.

\subsection{Continuous ratio model}

\begin{proposition}[Meaning in the continuous ratio model]\label{prop:continuous-model}
Let $S=O=\R_{>0}$ with $\iota_S=\iota_O=\mathrm{id}$ and intrinsic costs $J_S=J_O=\Jcost$.  Let $\RefStruct$ be admissible (Definition~\ref{def:admissible-ref-struct}), so that
\[
  c_{\RefStruct}(s,o)=\Jcost\!\left(\frac{s}{o}\right).
\]
Then for every $s\in\R_{>0}$ there exists a \emph{unique} meaning, namely $\Mean_{\RefStruct}(s)=\{s\}$, and the minimum reference cost equals $0$.
\end{proposition}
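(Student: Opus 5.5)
The plan is to reduce the statement to Lemma~\ref{lem:Jcost-zero} together with the nonnegativity of $\Jcost$. Fix $s\in\R_{>0}$. Under admissibility the meaning set $\Mean_{\RefStruct}(s)$ is the argmin over $o\in\R_{>0}$ of $o\mapsto \Jcost(s/o)$. Since $\Jcost$ takes values in $[0,\infty)$, every candidate cost is at least $0$. Hence any $o$ with $\Jcost(s/o)=0$ is automatically a minimizer, and the minimum value is $0$.

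The steps are as follows.
\begin{enumerate}
\item \textbf{Existence.} Take $o=s$, which is admissible because $O=\R_{>0}\ni s$. Then $c_{\RefStruct}(s,s)=\Jcost(1)=0$ by normalization. So $s\in\Mean_{\RefStruct}(s)$ and the minimal reference cost equals $0$. Alternatively, this is exactly the ``moreover'' clause of Theorem~\ref{thm:meaning-exists}: here $Y=\R_{>0}$ is closed in $(0,\infty)$ and contains $x=s$. I would cite that theorem for consistency.
\item \textbf{Uniqueness.} Let $o\in\Mean_{\RefStruct}(s)$. Then $0\le\Jcost(s/o)\le \Jcost(s/s)=0$, so $\Jcost(s/o)=0$. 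Lemma~\ref{lem:Jcost-zero}, which applies because Proposition~\ref{prop:Jcost-axioms} verifies the axioms for \eqref{eq:Jcost}, gives $s/o=1$, that is, $o=s$. For a fully explicit check, $\Jcost(x)=(x-1)^2/(2x)=0$ forces $x=1$.
\item Combining the two steps gives $\Mean_{\RefStruct}(s)=\{s\}$.
\end{enumerate}

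There is no real obstacle. The only point needing care is that uniqueness of the meaning is uniqueness of the object, not merely of the scale value. That is immediate here because $\iota_O=\mathrm{id}$ is injective, so the fiber over the unique optimal scale $y_*=s$ is the singleton $\{s\}$.
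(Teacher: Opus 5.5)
Your proposal is correct and follows essentially the same route as the paper: nonnegativity of $\Jcost$ together with Lemma~\ref{lem:Jcost-zero} (the only zero is at $1$) shows that $o=s$ is the unique minimizer and that the minimum cost is $0$. Your separate existence step and your remark that $\iota_O=\mathrm{id}$ is injective simply spell out what the paper's short argument leaves implicit.
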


\begin{proof}
By Lemma~\ref{lem:Jcost-zero}, one has $\Jcost(x)\ge 0$ for all $x>0$ with equality if and only if $x=1$.  Hence $c_{\RefStruct}(s,o)=\Jcost(s/o)\ge 0$ with equality if and only if $s/o=1$, i.e.\ $o=s$.  Therefore $o=s$ is the unique minimizer and the minimum cost is $0$.
\end{proof}

\subsection{Finite dictionaries and boundary points}

\begin{example}[Finite object dictionary]\label{ex:finite-dictionary}
Let $O=\{o_1,\dots,o_n\}$ be finite, set $y_i:=\iota_O(o_i)$, and keep $S=\R_{>0}$ with $\iota_S=\mathrm{id}$.  Under admissible reference, for a given {configuration} $s$ with ratio $x:=\iota_S(s)$ the meaning set is
\[
  \Mean_{\RefStruct}(s)=\Bigl\{o_i:\ \Jcost\!\left(\frac{x}{y_i}\right)=\min_{1\le j\le n}\Jcost\!\left(\frac{x}{y_j}\right)\Bigr\}.
\]
In general, boundary points (where the meaning set is not a singleton) occur when two or more of the values $\Jcost(x/y_i)$ tie.
\end{example}

\subsection{Geometric-mean boundaries for the explicit mismatch cost}

\begin{theorem}[Geometric-mean decision boundaries for the explicit mismatch cost]\label{thm:geom-boundaries}
Assume the explicit mismatch functional \eqref{eq:Jcost} and admissible (ratio-induced) reference
$c_{\RefStruct}(s,o)=\Jcost(\iota_S(s)/\iota_O(o))$.
Let $O=\{o_1,\dots,o_N\}$ be a finite object set such that the ratios $y_i:=\iota_O(o_i)$ are pairwise distinct and ordered $0<y_1<\cdots<y_N$.
For $x:=\iota_S(s)\in\R_{>0}$ define the boundary points
\[
  m_i:=\sqrt{y_i y_{i+1}}\qquad (i=1,\dots,N-1),
\]
and set $m_0:=0$, $m_N:=+\infty$.
Then:
\begin{itemize}
\item If $m_{k-1}<x<m_k$ for some $k\in\{1,\dots,N\}$, then $o_k$ is the \emph{unique} meaning of $s$.
\item If $x=m_k$ for some $k\in\{1,\dots,N-1\}$, then $s$ has \emph{exactly two} meanings, namely $o_k$ and $o_{k+1}$.
\end{itemize}
Equivalently, the map $x\mapsto \operatorname*{arg\,min}_{i}\Jcost(x/y_i)$ is piecewise constant on the open intervals $(m_{k-1},m_k)$.
\end{theorem}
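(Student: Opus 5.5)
Pass to log-coordinates via Lemma~\ref{lem:log-cosh}. Write $t:=\log x$ and $u_i:=\log y_i$, so that $u_1<\cdots<u_N$, and
\[
\Jcost(x/y_i)=\cosh(t-u_i)-1.
\]
Since $\cosh$ is even and strictly increasing on $[0,\infty)$, the inequality $\Jcost(x/y_i)\le\Jcost(x/y_j)$ holds if and only if $|t-u_i|\le|t-u_j|$, with equality iff equality. The problem therefore reduces to nearest-point selection among the ordered reals $u_1<\cdots<u_N$ on the line. The boundary points become $\log m_i=\tfrac12(u_i+u_{i+1})$, the midpoints of consecutive $u$'s. The conventions $m_0=0$ and $m_N=+\infty$ correspond to $\log m_0=-\infty$ and $\log m_N=+\infty$.

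The key step is a pairwise comparison. For $i<j$, the condition $|t-u_i|<|t-u_j|$ is equivalent to $t<\tfrac12(u_i+u_j)$. This follows by squaring:
\[
(t-u_i)^2-(t-u_j)^2=(u_j-u_i)(u_i+u_j-2t).
\]
Likewise, equality holds iff $t=\tfrac12(u_i+u_j)$.

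Now suppose $\log m_{k-1}<t<\log m_k$, and fix $j\ne k$.
\begin{itemize}
\item If $j>k$, then $\tfrac12(u_k+u_j)\ge\tfrac12(u_k+u_{k+1})=\log m_k>t$. So $o_k$ is strictly better than $o_j$.
\item If $j<k$, then $\tfrac12(u_j+u_k)\le\log m_{k-1}<t$. So $o_k$ is again strictly better than $o_j$.
\end{itemize}
Hence $o_k$ is the unique minimizer.

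Next suppose $t=\log m_k$ with $1\le k\le N-1$. Then $o_k$ and $o_{k+1}$ tie, by the equality case for the pair $(k,k+1)$. For $j>k+1$ we have $\tfrac12(u_{k+1}+u_j)>\log m_k=t$, so $o_{k+1}$ beats $o_j$ strictly. For $j<k$ we have $\tfrac12(u_j+u_k)<\log m_k$, so $o_k$ beats $o_j$ strictly. Hence the meaning set is exactly $\{o_k,o_{k+1}\}$. Here strict monotonicity of the $u_i$, which comes from pairwise distinctness, is what makes these inequalities strict.

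The piecewise-constancy statement is then immediate from the first bullet. The intervals $(m_{k-1},m_k)$ together with the points $m_k$ partition $(0,\infty)$ because $m_0<m_1<\cdots<m_N$, which holds since consecutive midpoints are increasing.

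I expect no real obstacle. The only care needed is the bookkeeping of strict versus non-strict inequalities at the endpoints and the edge cases $k=1$ and $k=N$, where one side of the comparison is vacuous. Alternatively, one could avoid logarithms and compare $\Jcost(x/y_i)$ and $\Jcost(x/y_j)$ directly via $\tfrac12(x/y+y/x)$. In that form $\Jcost(x/y_i)-\Jcost(x/y_j)=\tfrac{(y_j-y_i)(x^2-y_iy_j)}{2x\,y_iy_j}$, which gives the same geometric-mean threshold $\sqrt{y_iy_j}$.
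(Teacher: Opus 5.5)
Your proof is correct, but it takes a different route from the paper. The paper stays in the variable $x$ and computes only the \emph{adjacent} differences $\Delta_i(x)=c_{\RefStruct}(s,o_{i+1})-c_{\RefStruct}(s,o_i)$, via the identity $2x\,\Delta_i(x)=(y_{i+1}-y_i)\bigl(1-x^2/(y_iy_{i+1})\bigr)$. It then chains these strict inequalities to show that $i\mapsto c_{\RefStruct}(s,o_i)$ strictly decreases up to index $k$ and strictly increases afterwards. That chaining step tacitly uses $m_1<\dots<m_{N-1}$.

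You instead pass to log-coordinates and recognize the argmin as nearest-point selection among $u_1<\dots<u_N$ on the line. This is a one-dimensional Voronoi partition whose cell boundaries are the midpoints. You then compare $o_k$ directly with \emph{every} $o_j$, so no chaining is needed and only the ordering of the $u_i$ enters the uniqueness and tie arguments.

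What your route buys is transparency and generality. It shows at once that geometric-mean boundaries hold for any cost that is a strictly increasing function of $|\log(x/y)|$. In particular it covers the whole family $\cosh(a\log x)-1$, $a>0$, of Proposition~\ref{prop:Jcost-characterization}. The paper's route is more computational, but it yields the unimodality of the cost sequence along the ordered dictionary as a by-product.

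One small slip: your displayed identity has its sign reversed. It should read $(t-u_j)^2-(t-u_i)^2=(u_j-u_i)(u_i+u_j-2t)$, or equivalently $(t-u_i)^2-(t-u_j)^2=(u_j-u_i)(2t-u_i-u_j)$. The equivalence you draw from it, $|t-u_i|<|t-u_j|\iff t<\tfrac12(u_i+u_j)$ for $i<j$, is nevertheless correct, so nothing downstream is affected. Your alternative formula $\tfrac{(y_j-y_i)(x^2-y_iy_j)}{2x\,y_iy_j}$ in the original variable is also correct.
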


\begin{proof}
Using \eqref{eq:Jcost} one computes, for each $i$,
\[
  c_{\RefStruct}(s,o_i)=\Jcost\!\left(\frac{x}{y_i}\right)=\frac{\bigl(\frac{x}{y_i}-1\bigr)^2}{2(x/y_i)}=\frac{(x-y_i)^2}{2x y_i}.
\]
Fix $i\in\{1,\dots,N-1\}$ and define the adjacent difference
\[
  \Delta_i(x):=c_{\RefStruct}(s,o_{i+1})-c_{\RefStruct}(s,o_i).
\]
Multiplying by $2x>0$ and simplifying gives
\[
  2x\,\Delta_i(x)=(y_{i+1}-y_i)\Bigl(1-\frac{x^2}{y_i y_{i+1}}\Bigr).
\]
Hence $\Delta_i(x)=0$ if and only if $x^2=y_i y_{i+1}$, i.e.\ $x=m_i$.  Moreover,
\(\Delta_i(x)>0\) when $x<m_i$ and \(\Delta_i(x)<0\) when $x>m_i$.
Therefore:
\begin{itemize}
\item if $x<m_i$ then $c_{\RefStruct}(s,o_i)<c_{\RefStruct}(s,o_{i+1})$ (so the adjacent comparison favors $o_i$),
\item if $x>m_i$ then $c_{\RefStruct}(s,o_{i+1})<c_{\RefStruct}(s,o_i)$ (so it favors $o_{i+1}$).
\end{itemize}
Fix $k\in\{1,\dots,N\}$ such that $m_{k-1}<x<m_k$.  For every $i\le k-1$ we have $x>m_i$, hence $\Delta_i(x)<0$, so
\(c_{\RefStruct}(s,o_{i+1})<c_{\RefStruct}(s,o_i)\).  Iterating these strict inequalities yields
\(c_{\RefStruct}(s,o_k)<c_{\RefStruct}(s,o_i)\) for all $i<k$.  For every $i\ge k$ we have $x<m_i$, hence $\Delta_i(x)>0$, so
\(c_{\RefStruct}(s,o_{i+1})>c_{\RefStruct}(s,o_i)\).  Iterating yields
\(c_{\RefStruct}(s,o_k)<c_{\RefStruct}(s,o_j)\) for all $j>k$.  Therefore $o_k$ is the unique minimizer.

If $x=m_k$ for some $k\in\{1,\dots,N-1\}$, then for every $i<k$ we still have $x>m_i$ and the costs strictly decrease up to index $k$, while for every $i\ge k+1$ we have $x<m_i$ and the costs strictly increase from index $k+1$ onward.  At $i=k$ one has $\Delta_k(m_k)=0$, i.e.\ $c_{\RefStruct}(s,o_k)=c_{\RefStruct}(s,o_{k+1})$.  Hence the argmin consists of exactly two meanings, $\{o_k,o_{k+1}\}$.
\end{proof}

\begin{corollary}[Stability away from boundaries]\label{cor:stability-away}
Under the hypotheses of Theorem~\ref{thm:geom-boundaries}, if $m_{k-1}<x<m_k$ then there exists $\delta>0$ such that every $x'$ with $|x'-x|<\delta$ satisfies $m_{k-1}<x'<m_k$ and hence has the same unique meaning $o_k$.
\end{corollary}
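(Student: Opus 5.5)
The plan is to reduce the statement to the openness of the interval $(m_{k-1},m_k)$ in $\R_{>0}$ and then apply Theorem~\ref{thm:geom-boundaries} at every point of that interval. Fix $k\in\{1,\dots,N\}$ and $x$ with $m_{k-1}<x<m_k$. We take
\[
\delta:=\min\{x-m_{k-1},\ m_k-x\},
\]
using the conventions $m_0=0$ and $m_N=+\infty$. If $N=1$, both endpoints are infinite or zero; in that case we simply take $\delta:=x$, and every $x'>0$ qualifies. Note that $\delta\le x$ always, since $m_{k-1}\ge 0$. It follows that $|x'-x|<\delta$ forces $x'>x-\delta\ge 0$, so $x'$ is an admissible positive ratio. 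Moreover, $\delta>0$ because both inequalities $m_{k-1}<x<m_k$ are strict.

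Next, for $|x'-x|<\delta$ we obtain the two required bounds directly from the choice of $\delta$:
\[
x'>x-\delta\ge m_{k-1},
\qquad
x'<x+\delta\le m_k .
\]
When $m_k=+\infty$ the second bound is vacuous. Hence $m_{k-1}<x'<m_k$.

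Finally, we apply the first bullet of Theorem~\ref{thm:geom-boundaries} to any configuration $s'$ with scale $x'$. This is permitted because the theorem is stated for an arbitrary positive value $x=\iota_S(s)\in\R_{>0}$, so we read it pointwise in $x$. It gives $o_k$ as the unique meaning of $s'$.

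I do not expect any genuine obstacle here. The only care needed is in the degenerate endpoints $m_0=0$ and $m_N=\infty$, where we must handle the minimum with an infinite entry correctly and ensure $x'$ remains positive.
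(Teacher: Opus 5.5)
Your proof is correct and follows the paper's argument: both choose $\delta$ from the distance of $x$ to the nearer endpoint of the open interval $(m_{k-1},m_k)$ and then apply the first bullet of Theorem~\ref{thm:geom-boundaries} at $x'$. The paper takes $\delta=\min\{x-m_{k-1},\,m_k-x\}/2$; your unhalved choice works equally well because $|x'-x|<\delta$ is strict, and your explicit checks that $x'>0$ and that the conventions $m_0=0$, $m_N=+\infty$ are handled are a harmless addition.
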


\begin{proof}
Since $(m_{k-1},m_k)$ is open and contains $x$, choose $\delta:=\min\{x-m_{k-1},\,m_k-x\}/2>0$.  Then $|x'-x|<\delta$ implies $x'\in(m_{k-1},m_k)$, and the conclusion follows from Theorem~\ref{thm:geom-boundaries}.
\end{proof}

\paragraph{Finite local resolution and discrete meaning cells.}
The stable decision regions around geometric means (Theorem~\ref{thm:geom-boundaries}) illustrate a formal instance of the \emph{Finite Local Resolution} principle emphasized in Recognition Geometry \cite[Axiom~4 (RG3)]{washburn2026RG}: when the dictionary $\iota_O(O)=\{y_1,\dots,y_N\}$ is finite, the argmin map is locally constant on open intervals, with changes confined to the discrete boundary set $\{\sqrt{y_i y_{i+1}}\}$.  In particular, away from the boundaries, meanings are stable under small perturbations (Corollary~\ref{cor:stability-away}).

\subsection{Numerical micro-example (three-object dictionary)}

Take $O=\{o_1,o_2,o_3\}$ with ratios $y_1=\tfrac14<y_2=1<y_3=4$, and keep $S=\R_{>0}$ with $\iota_S=\mathrm{id}$.  The boundary points are
$m_1=\sqrt{y_1y_2}=\tfrac12$ and $m_2=\sqrt{y_2y_3}=2$.
Thus a {configuration} with ratio $x$ means $o_1$ for $0<x<\tfrac12$, means $o_2$ for $\tfrac12<x<2$, and means $o_3$ for $x>2$ (with ties at the boundary points).

\begin{center}
\begin{tabular}{|c|c|c|c|c|}
\hline
$x$ & $c_{\RefStruct}(s,o_1)$ & $c_{\RefStruct}(s,o_2)$ & $c_{\RefStruct}(s,o_3)$ & meaning(s) \\
\hline
$\tfrac{3}{10}$ & $\tfrac{1}{60}$ & $\tfrac{49}{60}$ & $\tfrac{1369}{240}$ & $o_1$ \\
$\tfrac{3}{2}$  & $\tfrac{25}{12}$ & $\tfrac{1}{12}$ & $\tfrac{25}{48}$ & $o_2$ \\
$3$             & $\tfrac{121}{24}$ & $\tfrac{2}{3}$ & $\tfrac{1}{24}$ & $o_3$ \\
\hline
\end{tabular}
\end{center}

\begin{example}[Mediation can sharply reduce cost in a toy case]\label{ex:mediation-toy}
Let $a:=\iota_S(s)=4$ and $c:=\iota_O(o)=\tfrac14$, so the direct admissible reference cost is
$\Jcost(a/c)=\Jcost(16)=\tfrac{225}{32}$.
If the mediator space contains a configuration $m$ with ratio {$b_{\mathrm{geo}}:=\sqrt{ac}=1$}, then Theorem~\ref{thm:seq-mediator} gives an optimal sequential cost
\[
  c_{\RefStruct_2\circ\RefStruct_1}(s,o)=2\,\Jcost\!\left(\sqrt{\frac{a}{c}}\right)=2\,\Jcost(4)=\frac{9}{4},
\]
which is strictly smaller, in accordance with Corollary~\ref{cor:mediation-reduces}.
\end{example}

\section{Applications}\label{sec:applications}

This section records short corollaries and interpretive remarks that follow directly from the formal definitions and theorems; it makes no empirical or metaphysical claims beyond the stated axioms.

This section collects immediate, checkable consequences of the formal development. Each statement below follows from earlier definitions and theorems, and no external or empirical claim is being made. The meaning rule is an optimization rule (Definition~\ref{def:meaning}) driven by the canonical mismatch penalty \Jcost (Definition~\ref{def:Jcost}); the axiomatic characterization of \Jcost is classical and recorded for completeness in Appendix~\ref{app:dalembert}.

\subsection{Symbol grounding as a criterion}

We treat ``grounding'' as an internal consistency condition in this model: a token $s$ is grounded for an object $o$ when (i) $o$ is a meaning of $s$ (Definition~\ref{def:meaning}) and (ii) the symbol condition $J_S(s)<J_O(o)$ holds (Definition~\ref{def:symbol}).

\begin{corollary}[Grounding criterion under admissible reference]\label{cor:grounding}
Fix an admissible reference structure $\RefStruct$ (Definition~\ref{def:admissible-ref-struct}). Then, for $s\in S$ and $o\in O$,
\[
(s,o)\text{ is a symbol (Definition~\ref{def:symbol})}\quad\Longleftrightarrow\quad
o\in\Mean_{\RefStruct}(s)\ \text{ and }\ J_S(s)<J_O(o).
\]
\end{corollary}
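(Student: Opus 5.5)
The statement is essentially a restatement of Definition~\ref{def:symbol} combined with the definition of the meaning set, so the plan is to unfold both definitions and match them clause by clause. By Definition~\ref{def:symbol}, $s$ is a symbol for $o$ relative to $\RefStruct$ exactly when two conditions hold:
\begin{itemize}
\item the reference condition $\Mean_{\RefStruct}(s,o)$;
\item the compression condition $J_S(s)<J_O(o)$.
\end{itemize}
The compression clause appears verbatim on the right-hand side of the corollary, so nothing needs to be done there.

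For the reference clause, I will use the notation introduced right after Definition~\ref{def:meaning}: $\Mean_{\RefStruct}(s)=\{o\in O:\Mean_{\RefStruct}(s,o)\}$. Hence $\Mean_{\RefStruct}(s,o)$ holds if and only if $o\in\Mean_{\RefStruct}(s)$. Substituting this into the conjunction yields both directions of the equivalence at once.

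Admissibility (Definition~\ref{def:admissible-ref-struct}) is not actually needed for the logical equivalence. I will mention it only to record the concrete form of the right-hand side: under admissibility, $o\in\Mean_{\RefStruct}(s)$ means that $o$ minimizes $o'\mapsto\Jcost(\iota_S(s)/\iota_O(o'))$ over $O$. Also, $J_S(s)=\Jcost(\iota_S(s))$ and $J_O(o)=\Jcost(\iota_O(o))$ by the definition of a costed space. So the criterion is a checkable argmin condition together with a scalar inequality.

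I expect no real obstacle. The only care needed is the bookkeeping that identifies the relation $\Mean_{\RefStruct}(s,o)$ with membership in the set $\Mean_{\RefStruct}(s)$.
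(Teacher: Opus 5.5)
Your proposal is correct and matches the paper's proof, which likewise just unfolds Definition~\ref{def:symbol} and Definition~\ref{def:meaning} and identifies $\Mean_{\RefStruct}(s,o)$ with $o\in\Mean_{\RefStruct}(s)$. Your observation that admissibility plays no role in the logical equivalence is also accurate.
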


\begin{proof}
This is immediate from Definition~\ref{def:symbol} and Definition~\ref{def:meaning}.
\end{proof}

\begin{corollary}[Grounding rule for finite object dictionaries]\label{cor:grounding-dict}
Assume the finite-dictionary hypotheses of Theorem~\ref{thm:geom-boundaries}. As the configuration ratio $x=\iota_S(s)$ varies, the meaning set $\Mean_{\RefStruct}(s)$ is piecewise constant: it is a singleton on each interval $(m_{i-1},m_i)$ and can change only at the geometric-mean boundaries $m_i=\sqrt{y_i y_{i+1}}$. In particular, away from the boundaries the meaning is stable under small perturbations (Corollary~\ref{cor:stability-away}).
\end{corollary}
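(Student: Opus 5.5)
This is essentially a repackaging of Theorem~\ref{thm:geom-boundaries} and Corollary~\ref{cor:stability-away}, so I plan to derive it directly from them with no new computation. First I fix the finite-dictionary setting: an admissible reference with the explicit cost \eqref{eq:Jcost}, ordered distinct object ratios $0<y_1<\cdots<y_N$, and boundary points $m_i=\sqrt{y_iy_{i+1}}$ with the conventions $m_0=0$ and $m_N=+\infty$. The open intervals $(m_{i-1},m_i)$ for $i=1,\dots,N$, together with the finite set $\{m_1,\dots,m_{N-1}\}$, partition $(0,\infty)$. Before using this I will check that $m_{i-1}<m_i$, which holds because $\sqrt{y_{i-1}y_i}<\sqrt{y_iy_{i+1}}$ is equivalent to $y_{i-1}<y_{i+1}$.

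Next I apply the first bullet of Theorem~\ref{thm:geom-boundaries}. For every $x=\iota_S(s)\in(m_{i-1},m_i)$ it gives $\Mean_{\RefStruct}(s)=\{o_i\}$. So the meaning set is a singleton on each open interval, and it is constant there because the singleton does not depend on $x$ within the interval. Any change in $\Mean_{\RefStruct}(s)$ as $x$ varies must therefore take place at a point outside the union of these open intervals, which means at one of the $m_i$. The second bullet of the theorem adds that at $x=m_i$ the meaning set is $\{o_i,o_{i+1}\}$; this confirms that the boundaries are exactly where the meaning set can change, although the corollary does not require that fact.

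The stability statement follows immediately from Corollary~\ref{cor:stability-away}. For any $x$ in an open interval there is a $\delta>0$ such that every $x'$ with $|x'-x|<\delta$ has the same unique meaning.

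There is essentially no obstacle. The one point needing care is the wording: "piecewise constant" must be read as constant on each of the open cells, with the boundary points handled separately. The monotonicity $m_{i-1}<m_i$ is what guarantees that these cells are nonempty and disjoint.
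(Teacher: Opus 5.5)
Your proposal is correct and takes the same approach as the paper, whose proof simply says the corollary is immediate from Theorem~\ref{thm:geom-boundaries} and Corollary~\ref{cor:stability-away}. Your explicit check that $m_{i-1}<m_i$, so the open cells are nonempty and disjoint, is a harmless detail that the paper leaves implicit.
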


\begin{proof}
Immediate from Theorem~\ref{thm:geom-boundaries} and Corollary~\ref{cor:stability-away}.
\end{proof}

\subsection{Mathematical effectiveness via low-cost primitives}

The next corollary records a purely internal ``near-balance'' restriction: if a configuration has small intrinsic cost, then any of its meanings must lie in the corresponding low-mismatch window determined by the sublevel sets of \Jcost.

\begin{corollary}[Near-balance restricts possible referents]\label{cor:near-balance-window}
Assume $\RefStruct$ is admissible and that $1\in Y:=\iota_O(O)$. If $s\in S$ satisfies $J_S(s)\le \epsilon$, and if $o\in\Mean_{\RefStruct}(s)$, then
\[
\Jcost\!\left(\frac{\iota_S(s)}{\iota_O(o)}\right)\le \epsilon.
\]
Equivalently, $\iota_S(s)/\iota_O(o)\in[a_\epsilon,b_\epsilon]$ (Lemma~\ref{lem:sublevel-interval}) and hence $\iota_O(o)\in[\iota_S(s)/b_\epsilon,\ \iota_S(s)/a_\epsilon]$.
\end{corollary}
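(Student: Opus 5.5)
This is a direct specialization of Theorem~\ref{thm:low-cost-meaning}, so the plan is simply to invoke that result. Since $1\in Y=\iota_O(O)$, I choose $o_0\in O$ with $\iota_O(o_0)=1$. By admissibility (Definition~\ref{def:admissible-ref-struct}), $c_{\RefStruct}(s,o_0)=\Jcost(\iota_S(s))=J_S(s)$.

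Next, since $o\in\Mean_{\RefStruct}(s)$, Definition~\ref{def:meaning} applied with the competitor $o'=o_0$ gives
\[
\Jcost\!\left(\frac{\iota_S(s)}{\iota_O(o)}\right)=c_{\RefStruct}(s,o)\le c_{\RefStruct}(s,o_0)=J_S(s)\le\epsilon,
\]
which is the first claim. This is exactly inequality \eqref{eq:mean-cost-bound} chained with the hypothesis $J_S(s)\le\epsilon$.

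For the equivalent formulation, I apply Lemma~\ref{lem:sublevel-interval}. Note that the paper fixes $\Jcost$ by \eqref{eq:Jcost} throughout Section~\ref{sec:main-theorems}, so the lemma is available. It identifies the sublevel set $\{x>0:\Jcost(x)\le\epsilon\}$ with $[a_\epsilon,b_\epsilon]$, so the ratio $\iota_S(s)/\iota_O(o)$ lies in this interval. Since $\iota_O(o)>0$ and $0<a_\epsilon\le b_\epsilon$, inverting the two-sided bound yields $\iota_S(s)/b_\epsilon\le\iota_O(o)\le\iota_S(s)/a_\epsilon$.

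The only point needing care is the case $\epsilon\le 0$, where Lemma~\ref{lem:sublevel-interval} is stated only for $\epsilon>0$. For $\epsilon<0$ the hypothesis $J_S(s)\le\epsilon$ is vacuous, since $J_S\ge0$. For $\epsilon=0$, Lemma~\ref{lem:Jcost-zero} forces the ratio to equal $1=a_0=b_0$, which matches the formulas. Beyond this, there is no real obstacle: the argument is a restatement of \eqref{eq:mean-cost-bound}–\eqref{eq:object-scale-window}.
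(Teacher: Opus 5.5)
Your proposal is correct and follows the paper, which proves this by simply citing Theorem~\ref{thm:low-cost-meaning}; you re-derive \eqref{eq:mean-cost-bound} using the competitor $o_0$ and then apply Lemma~\ref{lem:sublevel-interval}, exactly as in that theorem's proof. Your extra treatment of the cases $\epsilon\le 0$ is harmless, and the paper leaves it implicit.
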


\begin{proof}
Immediate from Theorem~\ref{thm:low-cost-meaning}.
\end{proof}

\begin{remark}[Compositional ``range expansion'' (model-dependent)]\label{rem:range-expansion}
In a continuous ratio model where ratios can be realized densely (e.g.\ $S=O=\mathbb R_{>0}$ with $\iota=\mathrm{id}$ as in Proposition~\ref{prop:continuous-model}), one can iterate the sequential composition rule (Definition~\ref{def:sequential-ref}) to mediate a large ratio by many small steps.

Fix $a,c>0$ and write the target ratio as $a/c=e^{t}$. For any $k\ge 1$, choose intermediate ratios $b_j:=a\,e^{-jt/k}$ so that
\[
\frac{a}{b_1}=\frac{b_1}{b_2}=\cdots=\frac{b_{k-1}}{c}=e^{t/k}.
\]
If the mediator dictionaries contain objects with these ratios, this yields a $k$-step mediation with total mismatch cost
\[
\sum_{j=1}^{k} \Jcost(e^{t/k})=k\,\Jcost(e^{t/k})=k\,\bigl(\cosh(t/k)-1\bigr),
\]
which tends to $0$ as $k\to\infty$ since $\cosh u-1\sim u^2/2$ as $u\to 0$.

Moreover, for the explicit penalty $\Jcost(e^{u})=\cosh(u)-1$, convexity of $u\mapsto\cosh(u)$ implies that equal log-increments minimize the sum among all $k$-step chains with total log-ratio $t$.

This observation is internal to the ratio model; empirical relevance depends on which intermediate ratios are realizable in the intended application domain.
\end{remark}

\subsection{Information-theoretic interpretation}

Although our framework is stated in intrinsic-cost terms, the canonical mismatch penalty admits a simple log-ratio form. We record the identity as a proposition; any further links to coding/learning are interpretive and not used in the proofs.

\begin{proposition}[Log-ratio form of the canonical mismatch cost]\label{prop:logratio}
For $x>0$ write $x=e^{t}$. Then the canonical cost satisfies
\[
\Jcost(x)=\Jcost(e^{t})=\cosh(t)-1.
\]
In particular, $\Jcost$ is a convex even function of the log-ratio $t=\log x$ and vanishes exactly at $t=0$.
\end{proposition}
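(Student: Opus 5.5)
The identity follows directly from Lemma~\ref{lem:log-cosh}, which already gives $\Jcost(e^t)=\cosh(t)-1$ for every $t\in\R$. Given $x>0$, set $t:=\log x$. Then $x=e^t$, and the map $t\mapsto e^t$ is a bijection $\R\to(0,\infty)$, so every $x>0$ has exactly one such representation. Substituting it yields $\Jcost(x)=\Jcost(e^{t})=\cosh(t)-1$. Alternatively, one can expand \eqref{eq:Jcost} directly: $\tfrac12(e^t+e^{-t})-1=\cosh t-1$.

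For the ``in particular'' clause I will check three properties of $g(t):=\cosh(t)-1$ in turn.
\begin{enumerate}
\item \emph{Evenness.} This holds because $\cosh(-t)=\cosh(t)$. It is also the log-coordinate form of the inversion symmetry in Definition~\ref{def:cost-axioms}(2), since $x\mapsto x^{-1}$ corresponds to $t\mapsto -t$.
\item \emph{Convexity.} We have $g''(t)=\cosh(t)\ge 1>0$, so $g$ is in fact strictly convex on $\R$. One caveat: convexity in $t$ is \emph{not} inherited from convexity in $x$, because composing a convex function with $\exp$ does not in general preserve convexity. It therefore has to be verified directly in the log variable, as done here.
\item \emph{Zero set.} Since $\cosh t\ge 1$ with equality only at $t=0$, we get $g\ge 0$ and $g(t)=0$ iff $t=0$. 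Equivalently, Lemma~\ref{lem:Jcost-zero} says $\Jcost(x)=0$ iff $x=1$, which is the same as $t=0$.
\end{enumerate}

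No step here presents a real difficulty. The only point needing care is the convexity caveat in item 2: I will compute the second derivative of $\cosh$ rather than appeal to Proposition~\ref{prop:Jcost-axioms}, which concerns convexity in $x$.
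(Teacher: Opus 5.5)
Your proposal is correct and takes essentially the same approach as the paper, which proves the identity by substituting $x=e^{t}$ into $\Jcost(x)=\tfrac12(x+x^{-1})-1$, the same computation as Lemma~\ref{lem:log-cosh}. The paper leaves the ``in particular'' clause implicit, so your checks of evenness, strict convexity via $\frac{d^2}{dt^2}\cosh t=\cosh t>0$, and the zero set fill that in, and your caveat that convexity in $t$ must be checked directly rather than inherited from convexity in $x$ is accurate.
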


\begin{proof}
Substitute $x=e^{t}$ into $\Jcost(x)=\tfrac12(x+x^{-1})-1$ (Definition~\ref{def:Jcost}).
\end{proof}

\section{Related work and positioning}\label{sec:related}

This section places the framework in context, highlighting connections to aboutness in formal semantics, truthmaker-style ideas, and compression-based modeling, and clarifying what is new in the present optimization-based formulation.

\paragraph{Relation to \emph{Recognition Geometry}.}
Recognition Geometry \cite{washburn2026RG} develops an axiomatic recognition-first framework in which observable space is derived from recognition events via an operational quotient construction.  While the present paper does not attempt to construct an ambient geometry, it shares the same operational posture: the fundamental primitive is a measurable comparison (here the mismatch cost), and the induced semantic categories are those determined by minimizing or equating that comparison.  The comparative-recognizer formalism of \cite{washburn2026RG} provides a natural abstract home for the reference costs used here; we make this link explicit in Section~\ref{sec:costed-spaces}.

This section positions the manuscript relative to standard themes in semantics and information theory. We do \emph{not} present the mismatch penalty as novel: the axiom package in Definition~\ref{def:cost-axioms} is a convenient specification whose solutions are classical (Appendix~\ref{app:dalembert}). The contribution of the paper is instead the explicit \emph{optimization semantics} (Definition~\ref{def:meaning}) and the structural theorems derived from it (existence, stability geometry, compositionality, and mediation).

\paragraph{Symbol grounding and operational meaning rules.}
The symbol grounding problem concerns how tokens acquire meaning without a homunculus \cite{harnad1990}.  The present work is compatible with grounding motivations, but it is formulated as a \emph{mathematical model}: the meaning of $s$ is \emph{defined} as an argmin under an explicit cost.  Any interpretation as a cognitive mechanism requires extra hypotheses beyond those stated.

\paragraph{Compression principles.}
The general idea that effective representations trade off succinctness and fidelity is classical in information theory (Shannon \cite{shannon1948}) and in algorithmic notions of complexity \cite{kolmogorov1965}; MDL makes this tradeoff concrete in model selection \cite{rissanen1978}.  Our setup uses a different primitive: a ratio map $\iota$ into $\R_{>0}$ and a fixed mismatch penalty $\Jcost$, with compression enforced by the symbol condition $J_S(s)<J_O(o)$.  Within this model, reference and compositional behavior become theorem-level consequences.

\begin{remark}[Coding/learning viewpoint]\label{rem:coding-view}
In coding theory and learning, one often selects representations by minimizing a tradeoff between description length and distortion (e.g.\ Shannon \cite{shannon1948} and MDL \cite{rissanen1978}). Our framework instantiates a specific distortion---$\Jcost(\iota_S(s)/\iota_O(o))$---that is symmetric in under-/over-shooting and naturally expressed in log-scale (Proposition~\ref{prop:logratio}). This suggests interpreting meanings as ``best matches'' under a fixed mismatch penalty, with compression enforced by the symbol condition $J_S(s)<J_O(o)$.
\end{remark}

\paragraph{Subject-matter/aboutness and truthmaker-semantics literature.}
There is a substantial contemporary literature on "aboutness"/"subject matter" in semantics and logic, including Yablo's monograph \cite{yablo2014aboutness} and subsequent discussion and refinements (e.g.\ Rothschild \cite{rothschild2017yablo}, Fine \cite{fine2020yablo}, and Yablo's reply \cite{yablo2018reply}); see also Hawke's survey \cite{hawke2018theories}. Related frameworks connect hyperintensional content with truthmakers/truthmaker semantics (e.g.\ Fine \cite{fine2014truthmaker}). The present manuscript does not attempt to adjudicate between these accounts. Rather, it provides an explicit optimization layer which, once a modeling choice of scale maps is made, selects a subject matter/referent by minimizing a mismatch cost.

\paragraph{Novelty signal and conceptual payoff.}
Many of the analytic lemmas are consequences of the specific penalty $\Jcost$ and convexity. The intended novelty is the resulting \emph{checkable decision geometry} and \emph{compositional calculus} for meanings: finite dictionaries induce geometric-mean boundaries and stability margins, product models factorize exactly, and sequential mediation admits an explicit optimizer. These consequences are the main mathematical payoffs of the framework, and they make clear which modeling assumptions (the scale maps and admissibility hypotheses) must be checked in any intended application.

\paragraph{What is mathematically concrete here.}
Two examples of explicit structure are: (i) for finite object dictionaries under the canonical mismatch penalty, decision boundaries occur at geometric means (Theorem~\ref{thm:geom-boundaries}) and meanings are locally stable away from them (Corollary~\ref{cor:stability-away}); (ii) for sequential mediation, the optimal intermediate ratio is explicit (Theorem~\ref{thm:seq-mediator}) and strictly improves over direct reference when the mediator set contains the balance point (Corollary~\ref{cor:mediation-reduces}).

\paragraph{Interpretation layer.}
Sections~\ref{sec:applications} and~\ref{sec:discussion} illustrate how the proved statements can be read once a modeling choice for $\iota$ is fixed.  These illustrations are optional: removing them does not affect the correctness of the theorems.

\section{Discussion}\label{sec:discussion}

This section clarifies scope and interpretation: which parts are mathematical consequences of the axioms, which parts are modeling choices, and what additional assumptions would be needed to connect the formalism to empirical systems.

This section clarifies scope: which statements are proved inside the model and which statements are interpretive. It also records limitations and concrete mathematical extensions.

\subsection{What is proved vs. what is modeled}
The core mathematical content consists of the definitions and theorems in Sections~\ref{sec:Jcost}--\ref{sec:examples}.  In particular, meaning is defined by optimization (Definition~\ref{def:meaning}); existence is conditional on an attainment hypothesis (Theorem~\ref{thm:meaning-exists}); and explicit geometry, stability, compositionality, and mediation statements follow for admissible reference structures and the canonical mismatch penalty (Theorems~\ref{thm:geom-boundaries}, \ref{thm:comp}, \ref{thm:seq-mediator}).

By contrast, any claim that a given real-world domain \emph{does} admit a scale map $\iota$ with the required properties, or that agents \emph{compute} meaning by solving the optimization problem, is an interpretation and is outside the theorem-level scope of this manuscript.

\subsection{Comparison to classical semantic viewpoints (brief)}
In many semantic theories, reference is treated as primitive, descriptivist, or truth-conditional.  Our work does not attempt to refute those approaches.  It proposes a different, explicit \emph{model}: reference emerges from minimizing an intrinsic mismatch penalty under a compression constraint.
\begin{itemize}
    \item \textbf{Frege-style}: reference is primitive. Here it is derived \emph{within the model} from cost minimization.
    \item \textbf{Russell-style}: reference is mediated by descriptions. Here ``description'' is replaced by intrinsic cost comparison and a mismatch penalty.
    \item \textbf{Possible-worlds}: semantic value is a truth-condition across worlds. We do not assume modal structure; we assume scale maps and a penalty.
\end{itemize}

Our framework aligns with information-theoretic viewpoints that treat representation as efficient coding (e.g.\ Shannon \cite{shannon1948}), but differs in that distortion is fixed to the canonical mismatch penalty $\Jcost$ and the meaning rule is set-valued (argmin), making the induced decision geometry explicit.

\subsection{Limitations}

\begin{enumerate}
    \item \textbf{Ratio embedding}: Our framework requires configurations to embed into $\R_{>0}$ via a ratio map. Not all semantic domains naturally admit such embeddings.

    \item \textbf{Single penalty}: We work with the canonical mismatch penalty $\Jcost$. Alternative penalties may be appropriate in domains where under- and over-shooting are not symmetric.

    \item \textbf{Static analysis}: The theory is synchronic. Incorporating learning or time-evolution requires additional structure (e.g., dynamics for $\iota$ or for admissible reference classes).
\end{enumerate}

\subsection{Future Directions}

\begin{enumerate}
    \item \textbf{Broader admissible reference.} Classify reference structures beyond the ratio-induced form (Definition~\ref{def:admissible-ref-struct}) for which analogues of the stability and compositionality theorems remain true.
    \item \textbf{Multi-dimensional ratios.} Extend the decision-geometry and boundary descriptions to $\iota:C\to(\R_{>0})^d$ with non-separable penalties, and quantify how coupling between coordinates affects stability margins.
    \item \textbf{Learning the scale map.} Given data of successful/unsuccessful references, formulate and analyze estimation procedures for $\iota$ (and admissible reference parameters) that preserve the proved invariances.
\end{enumerate}

\section{Conclusion}

This section summarizes the contributions and limitations of the model and records a few directions for refinement and application within the axioms fixed above.

{We have developed a mathematical \emph{model} of reference grounded in cost minimization. The theorem-level contributions are internal to the stated axioms and hypotheses.}

We summarize the main points:

\begin{enumerate}
    \item \textbf{Reference as compression}: Symbols are low-cost encodings of high-cost objects.

    \item \textbf{Canonical mismatch geometry}: The canonical penalty $\Jcost(x) = \frac{1}{2}(x + x^{-1}) - 1$ yields explicit decision boundaries and stability regions for finite dictionaries (Theorem~\ref{thm:geom-boundaries}).

    \item \textbf{Universal backbone}: Near-balanced configurations provide a provable backbone window around balance under admissible reference (Theorem~\ref{thm:backbone}). Global descriptive reach is obtained by composing many such low-cost primitives (Section~\ref{sec:compositionality}).

    \item \textbf{Compositionality}: Reference structures compose via products and sequences.
\end{enumerate}

The framework connects a simple optimization semantics with explicit geometric and compositional structure. Any application to a specific empirical domain requires specifying an appropriate scale map $\iota$ and verifying that the admissibility assumptions reasonably match that domain.

\section*{Acknowledgments}

We briefly acknowledge contributions and feedback that improved the exposition.

{We thank colleagues and readers for helpful discussions and feedback on earlier drafts.}

\appendix

\section{Classical characterization of the mismatch penalty}\label{app:dalembert}

This appendix records a classical functional-equation characterization showing that the explicit mismatch penalty used in the paper is essentially forced (up to scale) by the stated axioms.

{We prove Proposition~\ref{prop:Jcost-characterization}. The underlying functional-equation step is classical; see, for example, Acz\'{e}l \cite{aczel1966} or Kuczma \cite{kuczma2009}. We include the argument here to keep the manuscript self-contained and to clarify that the mismatch penalty is not introduced as a new object.}

\begin{lemma}[Convexity implies continuity]\label{lem:convex-continuous}
Let $I\subset\R$ be an open interval and let $g:I\to\R$ be finite-valued and convex. Then $g$ is continuous on $I$.
(See, e.g., Rockafellar \cite[Thm.~10.1]{rockafellar1970}.)
\end{lemma}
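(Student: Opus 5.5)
The statement is the standard fact that a finite convex function on an open interval is continuous, so the plan is the classical chord-slope argument. Fix $x_0\in I$ and choose $\delta>0$ with $[x_0-\delta,x_0+\delta]\subset I$; this is possible because $I$ is open.

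The first step is a \emph{local upper bound}. Every point of $[x_0-\delta,x_0+\delta]$ is a convex combination of the endpoints. Convexity therefore gives
\[
g\le M:=\max\{g(x_0-\delta),\,g(x_0+\delta)\}
\]
on that interval. Here finite-valuedness is what makes $M$ a real number.

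The second step is the \emph{two-sided estimate}. For $0<|h|<\delta$, write $h=\theta\delta$ with $\theta\in(0,1]$ in absolute value, and set $\sigma=\operatorname{sign}(h)$. Then
\[
x_0+h=(1-|\theta|)\,x_0+|\theta|\,(x_0+\sigma\delta),
\]
so convexity gives
\[
g(x_0+h)-g(x_0)\le |\theta|\,\bigl(M-g(x_0)\bigr).
\]
For the matching lower bound, write $x_0$ as a convex combination of $x_0+h$ and $x_0-\sigma\delta$:
\[
x_0=\tfrac{1}{1+|\theta|}\,(x_0+h)+\tfrac{|\theta|}{1+|\theta|}\,(x_0-\sigma\delta).
\]
This yields
\[
g(x_0)\le \tfrac{1}{1+|\theta|}\,g(x_0+h)+\tfrac{|\theta|}{1+|\theta|}\,M,
\]
which rearranges to
\[
g(x_0+h)-g(x_0)\ge -|\theta|\,\bigl(M-g(x_0)\bigr).
\]
Together the two bounds give
\[
|g(x_0+h)-g(x_0)|\le \frac{|h|}{\delta}\,\bigl(M-g(x_0)\bigr)\to 0 \quad\text{as } h\to 0,
\]
which is continuity at $x_0$.

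The only delicate point is the lower bound in the second step. It requires the reflected point $x_0-\sigma\delta$ to stay inside $I$, which holds because the full interval $[x_0-\delta,x_0+\delta]$ lies in $I$.

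Alternatively, one can simply cite Rockafellar's Theorem 10.1, as the statement itself indicates. Within the appendix, this lemma will be applied to $\Jcost$, which is strictly convex on $(0,\infty)$, to conclude that $f(u)=1+\Jcost(e^u)$ is continuous as a composition of continuous maps.
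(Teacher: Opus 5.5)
Your proof is correct. The two chord identities check out, using $|\theta|\sigma=\theta$. The reflected point $x_0-\sigma\delta$ lies in $[x_0-\delta,x_0+\delta]\subset I$. And $M-g(x_0)\ge 0$ by your first step, so the final estimate $|g(x_0+h)-g(x_0)|\le (|h|/\delta)\,(M-g(x_0))$ is meaningful and gives continuity at every $x_0$.

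The paper itself gives no argument for this lemma; it only points to Rockafellar's Theorem~10.1. That theorem is the general statement that a convex function on $\R^n$ is continuous relative to any relatively open convex subset of its effective domain. The difference is therefore one of route:
\begin{itemize}
\item The citation buys the multidimensional, relative-interior version, at the cost of importing convex-analysis machinery for a fact that is elementary in one variable.
\item Your chord-slope argument uses nothing beyond the definition of convexity and the openness of $I$. It keeps the appendix self-contained, which is the authors' stated aim there, and it also yields an explicit local Lipschitz-type modulus at each point.
\end{itemize}
For the only use of the lemma in the paper, namely continuity of $\Jcost$ on $(0,\infty)$ and hence of $f(u)=1+\Jcost(e^{u})$, your version is entirely sufficient.
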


\noindent We apply this standard convexity fact to the mismatch penalty to obtain the regularity needed for the functional-equation classification.

\begin{lemma}[Regularity for the log-transformed d'Alembert equation]\label{lem:dalembert-regularity}
Assume $\Jcost$ satisfies Definition~\ref{def:cost-axioms}. Define $C:(0,\infty)\to\R$ by $C(x):=1+\Jcost(x)$ and $f:\R\to\R$ by $f(u):=C(e^{u})$. Then $f$ is continuous, satisfies
\[
  f(u+v)+f(u-v)=2f(u)f(v)\qquad(u,v\in\R),
\]
and obeys $f(0)=1$. In particular, the hypotheses of Lemma~\ref{lem:dalembert-classification} (and of the classical theorems of Acz'{e}l and Kuczma) apply to $f$.
\end{lemma}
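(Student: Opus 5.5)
The argument has three parts: continuity of $f$, the functional equation, and the value $f(0)=1$. We treat each in turn, working directly from Definition~\ref{def:cost-axioms}.

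\emph{Continuity.} By axiom (3), $\Jcost$ is strictly convex and finite-valued on the open interval $(0,\infty)$, so Lemma~\ref{lem:convex-continuous} makes $\Jcost$ continuous there. Hence $C=1+\Jcost$ is continuous on $(0,\infty)$. The map $u\mapsto e^{u}$ is a continuous map $\R\to(0,\infty)$, so the composition $f(u)=C(e^{u})$ is continuous on $\R$. No convexity of $f$ itself is needed. This matters because convexity is not preserved by the logarithmic change of variables, and the only place convexity enters is this continuity step.

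\emph{Functional equation.} First rewrite \eqref{eq:dalembert} in terms of $C$. With $C=1+\Jcost$, the right-hand side becomes
\[
2\Jcost(x)+2\Jcost(y)+2\Jcost(x)\Jcost(y)=2\bigl(1+\Jcost(x)\bigr)\bigl(1+\Jcost(y)\bigr)-2=2C(x)C(y)-2 .
\]
The left-hand side is $C(xy)+C(x/y)-2$. Adding $2$ to both sides gives the multiplicative identity
\[
C(xy)+C(x/y)=2C(x)C(y)\qquad(x,y>0),
\]
which is the same computation as in the proof of Proposition~\ref{prop:Jcost-axioms}, run in reverse. Now substitute $x=e^{u}$ and $y=e^{v}$ with $u,v\in\R$. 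This is legitimate because $\exp:\R\to(0,\infty)$ is a bijection, so every pair $(u,v)$ corresponds to an admissible pair $(x,y)$. Since $xy=e^{u+v}$ and $x/y=e^{u-v}$, the identity becomes $f(u+v)+f(u-v)=2f(u)f(v)$.

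\emph{Normalization.} By axiom (1), $f(0)=C(1)=1+\Jcost(1)=1$.

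All three properties are now established, so $f$ is a continuous solution of d'Alembert's equation with $f(0)=1$, and the classical classification applies. As remarked in the paper, $f$ is also even (by inversion symmetry) and satisfies $f\ge 1$; these facts are not needed for this lemma and are used only later to select the $\cosh$ branch. There is no real obstacle in the argument. The only point requiring care is that continuity must come from convexity of $\Jcost$ on $(0,\infty)$ and not from $f$, which is exactly the order of steps above.
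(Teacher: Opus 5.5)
Your proposal is correct and follows essentially the same route as the paper: continuity of $\Jcost$ from convexity via Lemma~\ref{lem:convex-continuous}, rewriting \eqref{eq:dalembert} as the multiplicative identity for $C=1+\Jcost$ and substituting $x=e^{u}$, $y=e^{v}$, and $f(0)=C(1)=1$ from normalization. Your explicit expansion of the right-hand side as $2C(x)C(y)-2$ just spells out the equivalence that the paper states without computation.
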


\begin{proof}
By strict convexity (Definition~\ref{def:cost-axioms}(3)), $\Jcost$ is convex and finite-valued on $(0,\infty)$, hence continuous by Lemma~\ref{lem:convex-continuous}; therefore $C=1+\Jcost$ and $f(u)=C(e^{u})$ are continuous. The multiplicative identity in Definition~\ref{def:cost-axioms}(4) is equivalent to \eqref{eq:mult-dalembert-C} for $C$, and substituting $x=e^{u}$, $y=e^{v}$ yields the displayed d'Alembert equation for $f$. Finally $f(0)=C(1)=1$ by normalization.
\end{proof}

\begin{lemma}[Continuous solutions of d'Alembert's equation]\label{lem:dalembert-classification}
Let $f:\R\to\R$ be continuous and satisfy
\[
  f(t+s)+f(t-s)=2f(t)f(s)\qquad(t,s\in\R),
\]
with $f(0)=1$. Then either $f\equiv 1$, or there exists $a>0$ such that $f(t)=\cos(at)$ for all $t\in\R$, or there exists $a>0$ such that $f(t)=\cosh(at)$ for all $t\in\R$.
\end{lemma}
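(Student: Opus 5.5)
The plan is the classical route: smooth $f$ by integration, turn the functional equation into the ODE $f''=kf$, and read off the three solution families from the sign of $k$.

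\emph{Step 1: $f$ is even.} Setting $t=0$ in the equation gives $f(s)+f(-s)=2f(0)f(s)=2f(s)$, so $f(-s)=f(s)$.

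\emph{Step 2: regularity by mollification.} Since $f$ is continuous with $f(0)=1$, there is $\delta>0$ such that
\[
  c:=\int_0^\delta f(s)\,ds>0 .
\]
Integrate the functional equation in $s$ over $[0,\delta]$:
\[
  2c\,f(t)=\int_0^\delta f(t+s)\,ds+\int_0^\delta f(t-s)\,ds=\int_{t-\delta}^{t+\delta} f(r)\,dr .
\]
The right-hand side is $C^1$ in $t$ because $f$ is continuous, so $f\in C^1$. Feeding this back into the same identity bootstraps to $f\in C^2$, and indeed to $f\in C^\infty$.

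\emph{Step 3: derive the ODE.} Differentiate the equation twice in $s$ and set $s=0$:
\[
  2f''(t)=2f(t)f''(0).
\]
With $k:=f''(0)$ this is $f''=kf$, with initial data $f(0)=1$. Evenness from Step 1 gives $f'(0)=0$.

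\emph{Step 4: solve the initial value problem.} By uniqueness for linear ODEs there are three cases.
\begin{itemize}
  \item If $k=0$, then $f\equiv 1$.
  \item If $k=a^2>0$ with $a>0$, then $f(t)=\cosh(at)$.
  \item If $k=-a^2<0$ with $a>0$, then $f(t)=\cos(at)$.
\end{itemize}
Each of these functions satisfies d'Alembert's equation, which can be checked directly via the sum-to-product formulas, so the classification is exhaustive and sharp.

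\emph{Main obstacle.} The step needing the most care is Step 2, the regularity bootstrap from mere continuity. The key device is choosing $\delta$ so that $c\neq 0$, which continuity at $0$ together with $f(0)=1$ guarantees. Once $f$ is $C^2$, Steps 3 and 4 are routine.
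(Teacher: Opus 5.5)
Your proof is correct. With $c=\int_0^\delta f>0$, the identity $2c\,f(t)=\int_{t-\delta}^{t+\delta}f(r)\,dr$ bootstraps continuity to $C^\infty$. Differentiating twice in $s$ at $s=0$ then gives $f''=f''(0)\,f$ with $f(0)=1$ and $f'(0)=0$. Uniqueness for this linear initial value problem leaves exactly the three families.

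The paper does not prove this lemma; it only cites Acz\'el and Kuczma. Your argument is therefore exactly what would make Appendix~\ref{app:dalembert} self-contained, which the appendix says it intends to be. What the citation buys is brevity; what your route buys is a short, self-contained proof using only basic calculus. Other classical routes exist:
\begin{itemize}
\item Cauchy's method chooses $c$ with $f>0$ on $[0,c]$ and writes $f(c)=\cos(\alpha c)$ or $\cosh(\alpha c)$. It propagates this to $c/2^n$ via $f(t/2)^2=\tfrac12\bigl(1+f(t)\bigr)$, extends to dyadic multiples via the equation, and closes by continuity.
\item The representation $f(t)=\tfrac12\bigl(g(t)+g(-t)\bigr)$, with $g$ a complex exponential, describes all solutions and uses regularity only at the last step.
\end{itemize}

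Your route also pays off concretely in the paper's application. It identifies $a^2=|f''(0)|$. There $f(t)=1+\Jcost(e^t)\ge 1=f(0)$, so $0$ is a minimum and $f''(0)\ge 0$. This excludes the cosine family at once, and strict convexity of $\Jcost$ excludes $f''(0)=0$.
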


\begin{proof}
This classification is classical; see Acz\'{e}l \cite[Ch.~2]{aczel1966} or Kuczma \cite[Ch.~13]{kuczma2009}.
\end{proof}

\begin{proof}[Proof of Proposition~\ref{prop:Jcost-characterization}]
Let $\Jcost$ satisfy Definition~\ref{def:cost-axioms}. Define
\[
  C(x):=1+\Jcost(x)\qquad(x>0).
\]
Then \eqref{eq:dalembert} is equivalent to the multiplicative identity
\begin{equation}\label{eq:mult-dalembert-C}
  C(xy)+C(x/y)=2C(x)C(y)\qquad(x,y>0).
\end{equation}
{Define $f:\R\to\R$ by $f(t):=C(e^{t})$. By Lemma~\ref{lem:dalembert-regularity}, $f$ is continuous. Substituting $x=e^{t}$ and $y=e^{s}$ into \eqref{eq:mult-dalembert-C} yields d'Alembert's functional equation}
\begin{equation}\label{eq:dalembert-additive}
  f(t+s)+f(t-s)=2f(t)f(s)\qquad(t,s\in\mathbb R).
\end{equation}
{Moreover, $f(0)=C(1)=1$ and $f(t)\ge 1$ for all $t$ since $\Jcost\ge 0$.}

{By Lemma~\ref{lem:dalembert-classification}, the continuous solutions of \eqref{eq:dalembert-additive} with $f(0)=1$ are $f\equiv 1$, $f(t)=\cos(at)$, or $f(t)=\cosh(at)$ (for some $a>0$, with the constant solution corresponding to $a=0$).} The constraint $f(t)\ge 1$ rules out the cosine family unless $a=0$, and strict convexity rules out the constant solution. Hence there exists $a>0$ such that $f(t)=\cosh(at)$ for all $t$.

Undoing the change of variables gives
\[
  C(x)=f(\log x)=\cosh(a\log x),\qquad x>0,
\]
and therefore
\[
  \Jcost(x)=C(x)-1=\cosh(a\log x)-1=\tfrac12\big(x^{a}+x^{-a}\big)-1.
\]
Finally, note that
\[
\cosh(a\log(\iota_S/\iota_O)) -1 = \cosh\big(\log((\iota_S)^{a}/(\iota_O)^{a})\big)-1,
\]
so replacing $\iota_S,\iota_O$ by $\tilde\iota_S:=\iota_S^{a}$ and $\tilde\iota_O:=\iota_O^{a}$ absorbs the parameter $a$ into the scale maps and produces the normalized choice $a=1$ at the level of ratio-induced reference costs.
\end{proof}


\begin{thebibliography}{99}

\bibitem{frege1892}
G. Frege.
\newblock \"Uber Sinn und Bedeutung.
\newblock {\em Zeitschrift f\"ur Philosophie und philosophische Kritik}, 100:25--50, 1892.

\bibitem{russell1905}
B. Russell.
\newblock On denoting.
\newblock {\em Mind}, 14(56):479--493, 1905.

\bibitem{wigner1960}
E. Wigner.
\newblock The unreasonable effectiveness of mathematics in the natural sciences.
\newblock {\em Communications on Pure and Applied Mathematics}, 13(1):1--14, 1960.

\bibitem{harnad1990}
S. Harnad.
\newblock The symbol grounding problem.
\newblock {\em Physica D: Nonlinear Phenomena}, 42(1-3):335--346, 1990.

\bibitem{shannon1948}
C.E. Shannon.
\newblock A mathematical theory of communication.
\newblock {\em Bell System Technical Journal}, 27(3):379--423; 27(4):623--656, 1948.
\bibitem{kolmogorov1965}
A.N. Kolmogorov.
\newblock Three approaches to the quantitative definition of information.
\newblock {\em Problems of Information Transmission}, 1(1):1--7, 1965.

\bibitem{rissanen1978}
{J. Rissanen.
\newblock Modeling by shortest data description.
\newblock {\em Automatica}, 14(5):465--471, 1978.}

\bibitem{aczel1966}
J. Acz\'{e}l.
\newblock Lectures on Functional Equations and Their Applications.
\newblock Academic Press, 1966.

\bibitem{kuczma2009}
M. Kuczma.
\newblock An Introduction to the Theory of Functional Equations and Inequalities: Cauchy's Equation and Jensen's Inequality.
\newblock 2nd edition, Birkh\"{a}user, 2009.

\bibitem{rockafellar1970}
R.T. Rockafellar.
\newblock \emph{Convex Analysis}.
\newblock Princeton University Press, 1970.

\bibitem{yablo2014aboutness}
{S.\ Yablo.
\newblock \emph{Aboutness}.
\newblock Princeton University Press, 2014.}

\bibitem{hawke2018theories}
{P.\ Hawke.
\newblock Theories of aboutness.
\newblock \emph{Australasian Journal of Philosophy}, 96(4):697--723, 2018.
\newblock doi:10.1080/00048402.2017.1388826.}

\bibitem{rothschild2017yablo}
{D.\ Rothschild.
\newblock Yablo's semantic machinery.
\newblock \emph{Philosophical Studies}, 174(3):787--796, 2017.
\newblock doi:10.1007/s11098-016-0759-3.}

\bibitem{fine2020yablo}
{K.\ Fine.
\newblock Yablo on subject-matter.
\newblock \emph{Philosophical Studies}, 177(1):129--171, 2020.
\newblock doi:10.1007/s11098-018-1183-7.}

\bibitem{yablo2018reply}
{S.\ Yablo.
\newblock Reply to Fine on aboutness.
\newblock \emph{Philosophical Studies}, 175(6):1495--1512, 2018.
\newblock doi:10.1007/s11098-017-0922-5.}

\bibitem{fine2014truthmaker}
{K.\ Fine.
\newblock Truth-maker semantics for intuitionistic logic.
\newblock \emph{Journal of Philosophical Logic}, 43(2--3):549--577, 2014.
\newblock doi:10.1007/s10992-013-9281-7.}

\bibitem{washburn2026RG}
{Washburn, J.; Zlatanovi\'c, M.; Allahyarov, E.
\newblock \emph{Recognition Geometry}.
\newblock \emph{Axioms}, to appear (accepted, 2026).}

\end{thebibliography}
\end{document}